\documentclass{tac}

\usepackage{amsmath}

\title{Algebraic coherators, controlled theories, and Grothendieck realizations}

\address{Unaffiliated\\
 Chicago, Illinois, USA}

\author{Johnathon Taylor}

\usepackage{tikz}
\usepackage{tikz-cd}

\newtheoremrm{construction}{Construction}
\newtheoremrm{conjecture}{Conjecture}

\begin{document}

\maketitle

\begin{abstract}
We introduce a construction of algebraic coherators for Grothendieck $\infty$-groupoids using the algebraic small object argument, replacing previous approaches we have used based on distributive series of monads with a more direct method for freely adjoining coherence data. Given a controlled theory, we define unreduced and reduced Grothendieck realizations, producing $\infty$-Lawvere theories and extending this construction functorially to connected diagrams of controlled theories. We apply this framework to construct globular models for monoidal $\infty$-groupoids, symmetric monoidal $\infty$-groupoids, coherent $\infty$-groups, and Picard $\infty$-groupoids. We define canonical semi-model structures on categories of models over $\infty$-Lawvere theories and formulate a generalized pushout conjecture that implies the existence of these semi-model structures and the Homotopy Hypothesis for Grothendieck $\infty$-groupoids.
\end{abstract}

\copyrightyear{2026}

\keywords{Lawvere theories, categorification, Grothendieck infinity groupoids, algebraic weak factorization systems}

\amsclass{18C10,18C30,18N25,18N99}

\eaddress{jt3theend17@gmail.com}

\tableofcontents

\section{Introduction}
In this paper, we revisit and refine several constructions from our previous work. In earlier papers and in our thesis \cite{Taylor2026}, we developed a program for studying Grothendieck $\infty$-groupoids. Our previous work focused on constructing theories of Grothendieck $\infty$-groupoids and higher variants of algebraic theories whose underlying objects are Grothendieck $\infty$-groupoids. Following helpful referee reports from the editors and reviewers at \emph{Applied Categorical Structures}, we came to the conclusion that Cheng's theory of distributive series of monads \cite{Cheng2011} is not the appropriate technical framework for these constructions for two fundamental reasons. First, it is not clear that the required constructions can be carried out. Second, even if they can, the resulting indexing becomes prohibitively complicated and obscures the underlying ideas. In this paper, we replace this approach with one based on Garner's theory of algebraic weak factorization systems \cite{Garner2009}. This alternative provides a cleaner, foundation for the constructions originally presented in our thesis.

In Section~2, we review the theory of Grothendieck $\infty$-groupoids. Grothendieck originally proposed the existence of these objects in \emph{Pursuing Stacks} \cite{Gr1}. Their modern development was initiated by Georges Maltsiniotis and has since been expanded by Ara, Bourke, Henry, Lanari, and Maltsiniotis, among others \cite{Dim1,Henry2016,HenLan,LanE,Geo2,Bo2}. We conclude the section by constructing the \emph{algebraic coherator}, which coincides with Ara's reduced coherator from Example~2.12 of \cite{Dim1}.

In Section~3, we combine the data of globular theories with an arbitrary limit sketch. We then introduce a notion of generalized contractibility that is suited to the construction of the Grothendieck realization developed in the following section. We conclude by proving that freely adjoining lifts is a well-behaved operation satisfying a universal property that plays a central role in Section~4.

In Section~4, we combine the initial finite product theory of \cite{Law2} with the algebraic coherator constructed in Section~4 to define the category of $\infty$-Lawvere theories. For every controlled theory $\Omega$, we construct an algebraic weak factorization system on an appropriate subcategory of $\infty$-Lawvere theories. We define the \emph{unreduced Grothendieck realization} of a controlled theory to be the $\infty$-Lawvere theory obtained by applying the fibrant replacement monad associated with the induced algebraic weak factorization system to the initial object. We further show that this construction is functorial in the controlled theory. We then define the \emph{reduced Grothendieck realization} by identifying the duplicate choice of lift introduced in the $\infty$-groupoid component of the Unreduced Grothendieck realization. Finally, we prove that both constructions extend naturally to functors defined on connected diagrams of controlled theories.

In Section~5, we describe the homotopy theory associated with $\infty$-groupoids and models of $\infty$-Lawvere theories. Specifically, we identify classes of weak equivalences, generating cofibrations, and generating trivial cofibrations. We conclude by describing the conjectural semi-model structures on the categories of $\infty$-groupoids and models of $\infty$-Lawvere theories. The existence of the semi-model structure for $\infty$-groupoids is known to follow from work of Henry and Lanari under Henry's pushout conjecture \cite{Henry2016,HenLan,LanE}. Moreover, Henry's conjecture implies the Homotopy Hypothesis \cite{Henry2016}. We formulate an analogous conjecture for $\infty$-Lawvere theories generated as a Grothendieck realization, which we call the \emph{Generalized Pushout Conjecture}. We obtain the following theorem.

\emph{Theorem: Assuming the Generalized Pushout Conjecture, the semi-model structure exists on $\infty\mathbf{Gpd}$ and the Homotopy Hypothesis holds. Moreover, the semi-model structure exists on $\mathbf{Mod}(\mathbf{UGR}(J))$ for every connected diagram of controlled theories.}

\subsection*{Background Assumptions and Notation}
We assume the reader is familiar with the material from the controlled theories section of my previous paper \cite{Taylor2026Controlled}, the algebraic small object argument of Garner \cite{Garner2009}, and the theory of limit sketches and the theories and extensions out of them. We list some notation here as not to have to create a separate section of the paper.

\begin{notation}
We write the following notation.
\begin{itemize}
    \item Given a limit sketch $E$, we write $\mathbf{Th}_E$ and $\mathbf{Ex}_E$ for the categories of theories and extension over $E$, respectively.
    \item We write $\mathbf{Top}$ for a convenient category of spaces.
    \item We write $\mathbf{Cat}$ for the category of small categories.
\end{itemize}
\end{notation}

\subsection*{Similar Work} 
Our work is similar to the work of Bressie \cite{Bressie2020}. In \cite{Bressie2020}, Bressie defines a methodology to categorify the syntactic structure for pros and uses the globular operads of Leinster \cite{Lein} as a backdrop. Our construction here is seemingly more general than the construction of Bressie and we use Grothendieck $\infty$-groupods as a backdrop.

\subsection*{Future Work}
As was stated in the future work section of \cite{Taylor2026Controlled}, we are moving towards building a bridge between categorified models of controlled theories. In particular, we will write a short paper on how to construct weak orientals in the category of Grothendieck $\infty$-groupoids.
 We study the induced \emph{nerve construction}. We ask whether the hypothetical canonical semi-model structure, if it exists, is transferred off of simplicial sets.  Following that, we will write the paper promised in \cite{Taylor2026Controlled} to build Quillen equivalences of semi-model categories between categorified models of controlled theories and the model categories of algebras over the nerve realization of \cite{Taylor2026Controlled} generated by those same controlled theories. 

\section{Grothendieck infinity groupoids}
\subsection*{Background}

This subsection reviews the basic theory of globular theories following
Maltsiniotis~\cite{Malt}, Bourke~\cite{Bo2}, and Ara~\cite{Dim1}. We include the necessary material to establish notation.

Globular sets provide a combinatorial framework for describing higher-dimensional cells together with their source and target operations. Their indexing category is the globe category.

\begin{notation}
The \emph{globe category} $\mathbf{G}$ is the category
\[
\cdots
\]
subject to the relations
\[
ss=ts,\qquad tt=st.
\]
\end{notation}

\begin{definition}
A \emph{globular object} in a category $C$ is a functor
\[
X:\mathbf{G}^{op}\longrightarrow C.
\]
\end{definition}

\begin{example}\label{example_important}
For each $n\ge0$, let
\[
D^n=\{x\in\mathbf R^n:\|x\|\le1\}.
\]
The assignments
\[
\sigma^n(x)=\left(x,\sqrt{1-\|x\|^2}\right),
\qquad
\tau^n(x)=\left(x,-\sqrt{1-\|x\|^2}\right)
\]
define a functor
\[
D:\mathbf G\to\mathbf{Top},
\]
where $\sigma^n$ and $\tau^n$ include the upper and lower hemispheres of
$D^{n-1}$ into $D^n$. Consequently every space $X$ determines a globular object
\[
\mathbf{Top}(D^{(-)},X).
\]
\end{example}

To describe higher-dimensional operations we require certain canonical limits of globular objects.

\begin{definition}
A \emph{table of dimensions} is an odd-length sequence
\[
\vec n=(n_1,\ldots,n_{2k+1})
\]
whose entries determine a zig-zag diagram in $\mathbf G$
\end{definition}

\begin{definition}
The \emph{height} of a table of dimensions $\vec n$ is
\[
\mathbf{hgt}(\vec n)=\max_i n_i.
\]
\end{definition}

\begin{definition}
Let $A:\mathbf G^{op}\to C$ be a globular object and let $\vec n$ be a table of dimensions.
The limit of the induced zig-zag diagram in $C$, when it exists, is called the
\emph{globular product} of $(A,\vec n)$.
\end{definition}

Globular products play the role of finite products for globular theories; they encode the pasting shapes on which higher-dimensional operations are defined.

The category $\Theta_0$ is obtained by freely adjoining globular products to the globe category.

\begin{definition}
The category $\Theta_0$ has tables of dimensions as objects and
\[
\Theta_0(\vec n,\vec m)
=
[\mathbf G^{op},\mathbf{Set}]
(Y(\vec n),Y(\vec m)),
\]
where $Y$ denotes the Yoneda embedding.
\end{definition}

Every table of dimensions determines a globular product in
$\Theta_0^{op}$, so $\Theta_0^{op}$ admits globular products.

\begin{lemma}
There is a functor
\[
D:\mathbf G^{op}\longrightarrow\Theta_0^{op}
\]
that is universal among globular-product-preserving extensions. Explicitly, if
$A:\mathbf G^{op}\to C$ admits globular products, then there exists an essentially unique globular-product-preserving extension
\[
A':\Theta_0^{op}\to C.
\]
Moreover, $A'(\vec n)$ is the globular product associated to $\vec n$.
\end{lemma}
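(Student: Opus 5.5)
The plan is to identify $\Theta_0$ with the full subcategory of $[\mathbf G^{op},\mathbf{Set}]$ on the globular sets $Y(\vec n)$, and to view $Y(\vec n)$ as a colimit. For a table of dimensions $\vec n=(n_1,\ldots,n_{2k+1})$, $Y(\vec n)$ is the globular set obtained by gluing the representables $Y(n_1),Y(n_3),\ldots,Y(n_{2k+1})$ along $Y(n_2),Y(n_4),\ldots,Y(n_{2k})$. This is the colimit in presheaves of the zig-zag $Y\circ(\text{zig-zag of }\vec n)$. Dually, in $\Theta_0^{op}$ the object $\vec n$ is the limit of the corresponding zig-zag of objects $D(n_i)$, with $D$ the evident functor $n\mapsto (n)$. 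Since $Y$ is fully faithful, $D$ is fully faithful too. The first step is therefore to check that these cocones are genuine colimits in $[\mathbf G^{op},\mathbf{Set}]$ and that they remain colimits in the full subcategory $\Theta_0$. The latter is automatic, because full subcategories reflect colimits that exist in the ambient category and lie in them.

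Next I construct the extension. Given $A:\mathbf G^{op}\to C$ with globular products, I choose for each $\vec n$ a limit $A'(\vec n)$ of the zig-zag $A(n_1)\to A(n_2)\leftarrow A(n_3)\to\cdots$. To define $A'$ on morphisms I use the Yoneda/density description. A morphism $\vec n\to\vec m$ in $\Theta_0^{op}$ is a map of globular sets $Y(\vec m)\to Y(\vec n)$. Since $Y(\vec m)$ is a colimit of representables $Y(m_j)$, such a map is a compatible family of elements of $Y(\vec n)(m_j)$. Each such element is a cell of the pasting shape $\vec n$, which lies in one of the summands $Y(n_i)$, so it is a morphism $n_i\to m_j$ in $\mathbf G$ up to the gluing identifications. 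Applying $A$ to this morphism gives a map $A'(\vec n)\to A(n_i)\to A(m_j)$. Compatibility across the gluing follows from the limit cone, and the universal property of $A'(\vec m)$ then yields the map $A'(\vec n)\to A'(\vec m)$. Functoriality and preservation of identities follow from the uniqueness part of the limit property. Extension along $D$ holds up to the canonical isomorphism for length-one tables.

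Preservation of globular products is then by construction: the limit cone of $\vec n$ in $\Theta_0^{op}$ is sent to the chosen limit cone in $C$. For essential uniqueness, suppose $B$ is another globular-product-preserving extension. Then $B(\vec n)$ is a limit of the same diagram $A(n_i)$, so there are canonical isomorphisms $B(\vec n)\cong A'(\vec n)$. Naturality with respect to every morphism of $\Theta_0^{op}$ follows because such morphisms are determined by their composites with the projections to the $D(m_j)$, which are morphisms of $D(\mathbf G)$ composed with projections, where $A'$ and $B$ agree.

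I expect the main obstacle to be the combinatorial description of $Y(\vec n)(m)$. The cells of the glued globular set must be precisely the cells of the individual globes modulo identification along shared boundaries, so that every map out of a representable factors through some $Y(n_i)$. The factorization need not be unique, but it must be coherent across the gluing. I would prove this by computing the pushout of presheaves pointwise in $\mathbf{Set}$ and checking that the relevant maps $Y(n_{2i})\to Y(n_{2i\pm1})$ are monomorphisms, so that the pointwise pushouts are honest unions. Alternatively, this whole step can be avoided by citing the general fact that $\Theta_0$ is the free completion of $\mathbf G$ under these specified colimits, namely the closure of the representables in presheaves under the chosen colimit shapes. The universal property of that free cocompletion, due to Kelly on free completions under a class of colimits, gives the lemma directly by duality.
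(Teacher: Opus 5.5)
Your proof is correct, but it takes a different route from the paper. The paper gives no argument of its own and simply invokes Lemma~2.1 of Bourke \cite{Bo2}; you prove the universal property by hand. Using $Y(\vec n)(m_j)\cong\operatorname{colim}_i\mathbf G(m_j,n_i)$, you send a class represented by $u:m_j\to n_i$ in $\mathbf G$ to $A(u)\circ\pi_i:A'(\vec n)\to A(m_j)$. Note the variance here: $u$ is a map $n_i\to m_j$ only in $\mathbf G^{op}$, not in $\mathbf G$ as you wrote. You then assemble these maps into a cone over the zig-zag of $\vec m$. After that you check three things: functoriality; preservation of the canonical cones, since the colimit leg $\iota_i:Y(n_i)\to Y(\vec n)$ is represented by $\mathrm{id}_{n_i}$ and so $A'(\iota_i)=\pi_i$; and essential uniqueness, by testing naturality against the projections.

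Your route is self-contained and isolates the one fact everything rests on: each map from a representable into a globular sum factors through a leg, coherently. It also shows why $C$ only needs the globular products of $A$. The citation buys brevity.

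Two refinements are worth making. First, the step you flag as the main obstacle is easier than you expect. You do not need the maps $Y(n_{2i})\to Y(n_{2i\pm1})$ to be monomorphisms. Well-definedness of $u\mapsto A(u)\circ\pi_i$ on the pointwise colimit in $\mathbf{Set}$ only requires that each generating identification, which comes from a structure map of the zig-zag, matches a commuting triangle of the limit cone.

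Second, your fallback citation of Kelly's free $\Phi$-cocompletion is not in the right form, for two reasons.
\begin{itemize}
\item Globular sums are colimits of specific diagrams with prescribed source and target maps, not of a class of shapes. Closing the representables under all zig-zag-shaped colimits already yields non-globular presheaves, such as two $1$-cells glued along their sources.
\item Kelly's universal property is stated for $\Phi$-cocomplete targets, whereas here $C$ is only assumed to have the globular products of $A$.
\end{itemize}
So rely on your direct argument, or on the citation the paper uses.
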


\begin{proof}
This is Lemma 2.1 of \cite{Bo2} from Bourke.
\end{proof}

We now introduce the lifting property that characterizes contractible globular theories.

\begin{definition}
Let $A$ be a globular object. An \emph{admissible pair} of dimension $n$ consists of parallel maps
\[
\begin{tikzpicture}[node distance=2cm]
    \node (A) {$X$};
    \node (B) [right of=A] {$A(n)$};
    \draw[transform canvas={yshift=0.3ex},->]
        (A) to node[above=3] {$f$} (B);
    \draw[transform canvas={yshift=-0.3ex},->,swap]
        (A) to node[below=3] {$g$} (B);
\end{tikzpicture}
\]
where $X$ is a globular product of height at most $n+1$ and either $n=0$ or the maps have common source and common target.
\end{definition}

\begin{definition}
A \emph{lift} of an admissible pair
$(f,g)$
is a morphism
\[
\delta_{f,g}:X\to A(n+1)
\]
whose source and target are $f$ and $g$, respectively.
\end{definition}

\begin{definition}
A globular object is \emph{contractible} if every admissible pair admits a lift.
A globular theory is contractible when regarded as a globular object.
\end{definition}

Grothendieck's approach constructs contractible globular theories inductively by freely adjoining lifts to chosen admissible pairs.

\begin{definition}
An $(\infty,0)$\emph{-coherator} is a contractible globular theory obtained as the colimit of a sequence
\[
\Theta_0^{op}=C_0
\longrightarrow
C_1
\longrightarrow
C_2
\longrightarrow\cdots,
\]
where each morphism freely adjoins lifts for a chosen collection of admissible pairs.
\end{definition}

\begin{notation}
If $C$ is an $(\infty,0)$-coherator, we write
\[
\infty\mathbf{Gpd}_C
=
\mathbf{Mod}_{\Theta_0^{op}}(C)
\]
for the category of globular-product-preserving functors
$C\to\mathbf{Set}$.
Its objects are called \emph{Grothendieck $\infty$-groupoids}.
\end{notation}

We add the following notation for later use to end this subsection.

\begin{notation}\label{notation_for_induced_ids}
Let $C$ be an $(\infty,0)$-coherator. For all $k\geq 0$, define $Z^k:0\to k$ to be the iterated composite of the lift $Z:i\to i+1$ for the admissible pair consisting of the identity and itself. By the universal property of the globular products, there is a uniquely induced map $Z^{\vec{p}}:0\to\vec{p}$ for all $\vec{p}\in\textbf{ob}(\Theta_0^\mathbf{op})$. 
\end{notation}

\subsection*{Factorization system for globular theories}

In this subsection we construct the algebraic weak factorization system that will generate a theory we use for the rest of the paper. The generating maps encode the operation of freely adjoining lifts to admissible pairs. Applying Garner's algebraic small object argument then produces a fibrant replacement monad whose fibrant replacement of the initial globular theory is an $(\infty,0)$-coherator. 

\begin{definition}\label{spheres_in_inf_gpds}
Let $\vec{p}$ be a table of dimensions and let $k\geq0$ satisfy
\[
\mathbf{hgt}(\vec{p})\leq k+1.
\]
The \emph{$(\vec{p},k)$-sphere} is the globular theory
\[
S_{\vec{p},k}
\]
obtained from $\Theta_0^\mathbf{op}$ by freely adjoining an admissible pair
\[
\begin{tikzpicture}[node distance=2cm]
    \node (A) {$\vec{p}$};
    \node (B)[right of=A]{$k$};
    \draw[transform canvas={yshift=0.3ex},->]
        (A) to node[above=3] {$f$} (B);
    \draw[transform canvas={yshift=-0.3ex},->,swap]
        (A) to node[below=3] {$g$} (B);
\end{tikzpicture}
\]
subject to the relations
\[
s\circ f=s\circ g
\qquad\text{and}\qquad
t\circ f=t\circ g
\]
whenever $k\geq1$. Equivalently, $S_{\vec{p},k}$ is the free globular theory containing a distinguished admissible pair of dimension $k$ with domain $\vec{p}$. There is a canonical inclusion
\[
\Theta_0^\mathbf{op}\hookrightarrow S_{\vec{p},k}.
\]
\end{definition}

\begin{definition}\label{disks_in_inf_gpds}
The \emph{$(\vec{p},k)$-disk}
\[
D_{\vec{p},k}
\]
is obtained from $S_{\vec{p},k}$ by freely adjoining a lift
\[
\delta_{f,g}:\vec{p}\longrightarrow k+1
\]
of the distinguished admissible pair so that the diagram
\[
\begin{tikzpicture}[node distance=2cm]
    \node (A) {$\vec{p}$};
    \node (B)[right of=A]{$k$};
    \node (C) [above of=B]{$k+1$};
    \draw[transform canvas={yshift=0.3ex},->]
        (A) to node[above=3] {$f$} (B);
    \draw[transform canvas={yshift=-0.3ex},->,swap]
        (A) to node[below=3] {$g$} (B);
    \draw[transform canvas={xshift=-0.3ex},->]
        (C) to node[left=3] {$s$} (B);
    \draw[transform canvas={xshift=0.3ex},->,swap]
        (C) to node[right=3] {$t$} (B);
    \draw[->]
        (A) to node[above=4] {$\delta_{f,g}$} (C);
\end{tikzpicture}
\]
commutes. Thus $D_{\vec{p},k}$ is the free globular theory obtained by specifying a filler for the distinguished admissible pair. It is equipped with the canonical inclusion
\[
\Theta_0^\mathbf{op}\hookrightarrow D_{\vec{p},k}.
\]
\end{definition}

\begin{notation}\label{incl_of_sphere_into_disks}
For every table of dimensions $\vec{p}$ and every $k\geq0$, there is a canonical inclusion
\[
j_{\vec{p},k}:S_{\vec{p},k}\longrightarrow D_{\vec{p},k}.
\]
We write
\[
I=
\left\{
j_{\vec{p},k}
:
S_{\vec{p},k}\rightarrow D_{\vec{p},k}
\;\middle|\;
\vec{p}\in\mathbf{ob}(\Theta_0^\mathbf{op}),\;
k\ge0
\right\}.
\]
\end{notation}

\begin{lemma}\label{admiss_1}
Each object $S_{\vec{p},k}$ and $D_{\vec{p},k}$ is presentable in
$\mathbf{Th}_{\Theta_0^\mathbf{op}}$.
Moreover, the set $I$ is admissible for Garner's algebraic small object argument.
\end{lemma}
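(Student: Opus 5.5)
We will treat $\mathbf{Th}_{\Theta_0^\mathbf{op}}$ as the category of models of a limit sketch, hence as a locally finitely presentable category, and then check the hypotheses of Garner's algebraic small object argument. \emph{Admissible} here means that the domains of the generating maps are small (say $\kappa$-presentable for some regular cardinal $\kappa$) in a cocomplete category, or that the ambient category is locally presentable.

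The first step is to recall that globular theories under $\Theta_0^\mathbf{op}$ are precisely the identity-on-objects (or globular-product-preserving) functors out of $\Theta_0^\mathbf{op}$. Such theories are the models of a limit sketch $E$ whose sorts are pairs of tables of dimensions. Composition, identities, and the maps induced by the universal property of globular products are then encoded as finite-limit/product operations, and preservation of globular products becomes equations. By the standard theory of limit sketches (which the background assumptions allow us to cite), $\mathbf{Th}_{\Theta_0^\mathbf{op}}\simeq\mathbf{Mod}(E)$ is locally finitely presentable. In particular it is cocomplete, and free constructions are given by left adjoints to forgetful functors to sorted sets.

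The second step is presentability of the objects. The theory $\Theta_0^\mathbf{op}$ is the initial object, hence finitely presentable. Next, $S_{\vec p,k}$ is obtained from the initial object by a finite colimit: freely adjoin two generators $f,g$ in the sort $(\vec p,k)$, which is a coproduct with two free models on one generator, and then impose the finitely many equations $sf=sg$, $tf=tg$ via a coequalizer. Similarly, $D_{\vec p,k}$ is a pushout of $S_{\vec p,k}$ along the free model on one generator $\delta$, followed by a coequalizer for $s\delta=f$ and $t\delta=g$. Free models on a single generator of a sort are finitely presentable because they represent the evaluation functor at that sort, and sorted evaluation preserves filtered colimits, since filtered colimits in $\mathbf{Mod}(E)$ are computed sortwise for a finite-limit sketch. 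Finite colimits of finitely presentable objects are finitely presentable. Therefore every $S_{\vec p,k}$ and $D_{\vec p,k}$ is finitely presentable.

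The final step is admissibility of $I$. $I$ is a set, indexed by $\mathbf{ob}(\Theta_0^\mathbf{op})\times\mathbb N$, and its domains are finitely presentable, hence $\omega$-small. Garner's theorem therefore applies in the locally presentable category $\mathbf{Th}_{\Theta_0^\mathbf{op}}$ and yields an algebraic weak factorization system cofibrantly generated by $I$, with the transfinite construction converging at stage $\omega$.

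The main obstacle is the first step: verifying carefully that globular theories form the models of a limit sketch for which filtered colimits are computed sortwise. This requires that preservation of globular products is expressible by finite data. We plan to handle it by citing Bourke's description of globular theories as identity-on-objects functors out of $\Theta_0^\mathbf{op}$, which makes the hom-sets the only data. Those hom-sets are all finitary equational structure, with no infinitary or limit-preservation conditions left to check.
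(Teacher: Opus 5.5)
Your argument is correct, but it does not follow the paper. The paper's proof is only a citation of Subsection~3.11 and Lemma~3.12 of \cite{Malt}, whereas you give a self-contained verification.

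You present $\mathbf{Th}_{\Theta_0^\mathbf{op}}$ as the category of models of a finitary essentially algebraic sketch whose sorts are the hom-sets $B(\vec q,\vec p)$. From this you conclude that it is locally finitely presentable, and you exhibit $S_{\vec p,k}$ and $D_{\vec p,k}$ as finite colimits of free models on a single generator. This proves more than the statement asks for:
\begin{itemize}
\item finite presentability rather than mere presentability, which is automatic once the ambient category is locally presentable;
\item convergence of Garner's construction at stage $\omega$;
\item local presentability of the ambient category, which settles Garner's global hypothesis outright rather than only the smallness of the domains of $I$.
\end{itemize}
The citation, by contrast, gains brevity and leaves the bookkeeping to Maltsiniotis.

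One point in your last paragraph should be corrected. Passing to identity-on-objects functors does not remove the globular-product condition. It is still the requirement that each comparison map $B(\vec q,\vec p)\to\lim_i B(\vec q,p_i)$ be a bijection. What rescues the argument is that each zig-zag is finite. The condition can therefore be imposed by finite cones, or by pairing operations defined on finite limits together with equations, as you say in your first step. This finiteness is exactly why filtered colimits are computed sortwise. State it explicitly instead of claiming that no limit-preservation conditions remain.
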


\begin{proof}
This follows from Subsection~3.11 and Lemma~3.12 of \cite{Malt}.
\end{proof}

\begin{notation}
Let
\[
(L_I,E_I,R_I,\delta_I)
\]
denote the algebraic weak factorization system on
$\mathbf{Th}_{\Theta_0^\mathbf{op}}$
cofibrantly generated by the set $I$.
\end{notation}

The fibrant replacement of the initial globular theory with respect to this algebraic weak factorization system recovers the algebraic coherator.

\begin{lemma}
Let
\[
J^{AC}:\Theta_0^\mathbf{op}\longrightarrow AC
\]
be the fibrant replacement of the initial object
\[
\mathrm{id}_{\Theta_0^\mathbf{op}}
\]
with respect to the algebraic weak factorization system
$(L_I,E_I,R_I,\delta_I)$.
Then $AC$ is an $(\infty,0)$-coherator.
\end{lemma}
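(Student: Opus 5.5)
We need to check two things: that $AC$ is contractible, and that $J^{AC}$ is a colimit of a sequence starting at $\Theta_0^{op}$ in which each step freely adjoins lifts to chosen admissible pairs. Both come from unwinding Garner's algebraic small object argument for the initial object $\mathrm{id}_{\Theta_0^{op}}$ of $\mathbf{Th}_{\Theta_0^{op}}$ (initial because every globular theory receives an essentially unique map from $\Theta_0^{op}$). By Lemma~\ref{admiss_1}, the generators $S_{\vec p,k}$ and $D_{\vec p,k}$ are presentable and $I$ is admissible. So Garner's construction applies: the factorization of $\Theta_0^{op}\to\mathbf{1}$ (terminal theory) is the colimit of a transfinite chain $C_0=\Theta_0^{op}\to C_1\to C_2\to\cdots$. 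Because the generators are presentable, and in fact finitely presentable since each involves finitely many generating arrows and relations, the chain stabilizes at $\omega$. Hence $AC=\mathrm{colim}_n C_n$.

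\textbf{Step 1: each step adjoins lifts.} Garner's step $C_n\to C_{n+1}$ is a pushout of the coproduct
\[
\coprod_{(j,u)} j:\coprod S_{\vec p,k}\to\coprod D_{\vec p,k},
\]
indexed over squares from $j\in I$ to $C_n\to\mathbf{1}$. Since the target is terminal, a square is just a map $u:S_{\vec p,k}\to C_n$. By the freeness in Definition~\ref{spheres_in_inf_gpds}, such a map is exactly an admissible pair $(f,g):\vec p\rightrightarrows k$ in $C_n$. By Definition~\ref{disks_in_inf_gpds}, pushing out along $j_{\vec p,k}$ freely adjoins a morphism $\vec p\to k+1$ with source $f$ and target $g$. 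So $C_{n+1}$ is obtained from $C_n$ by freely adjoining lifts for all admissible pairs of $C_n$.

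One subtlety is Garner's quotient step, which coequalizes against lifts already present via the comonad structure. For the fibrant replacement of the initial object this only identifies redundant copies. I would argue that the result is still of the form ``freely adjoin lifts for a chosen collection'': the chosen collection is the admissible pairs not already assigned a lift by the $R_I$-algebra structure built so far. This is the step I expect to be the main obstacle. I would handle it either by citing Garner's explicit description of the stages, or by noting, as the paper does, that the result matches Ara's reduced coherator (Example~2.12 of \cite{Dim1}), whose construction is exactly of this form.

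\textbf{Step 2: contractibility.} Let $(f,g):X\rightrightarrows n$ be an admissible pair in $AC$ with $X=\vec p$. By finite presentability of $S_{\vec p,n}$, the map $S_{\vec p,n}\to AC$ classifying the pair factors through some $C_m$. Its lift then exists in $C_{m+1}$, and it maps to a lift in $AC$. Alternatively, $AC\to\mathbf{1}$ is an $R_I$-algebra and therefore has the right lifting property against every $j_{\vec p,n}$, which directly gives the lift. Combining the two steps, $AC$ is an $(\infty,0)$-coherator.
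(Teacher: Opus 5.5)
Your set-up is fine: squares from $j_{\vec p,k}$ to $C_n\to\mathbf{1}$ are exactly the admissible pairs of $C_n$, and finite presentability gives convergence at $\omega$. Step~2 is also fine, since the $R_I$-algebra structure on $AC\to\mathbf{1}$ supplies a lift for every admissible pair. The paper's proof is only a pointer to adapting Maltsiniotis's Theorem~3.14, and unwinding Garner's construction as you do is the natural way to carry that out. The gap is in Step~1, at the point you flag, and neither fix you propose closes it.

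For $n\ge1$, Garner's $C_{n+1}$ is not a pushout of generators. Write $R^1$ for the one-step functor adjoining a lift to every admissible pair. Then $C_{n+1}$ is the coequalizer of the two maps $R^1C_{n-1}\to R^1C_n$. This coequalizer imposes $\delta_{x(u)}=\delta_u$ for every admissible pair $u$ of $C_{n-1}$, where $x\colon C_{n-1}\to C_n$ is the transition map and $\delta_u$ is the lift of $u$ already in $C_n$.

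That amounts to ``freely adjoin lifts for the pairs of $C_n$ outside the image of $x$'' only if $x$ is injective on admissible pairs. If distinct $u\neq u'$ had the same image, the coequalizer would also force $\delta_u=\delta_{u'}$. That is a relation between arrows already present in $C_n$, so the step would not be of the form the definition of an $(\infty,0)$-coherator requires. Your sentence ``this only identifies redundant copies'' is exactly this injectivity, asserted without proof. It is not a formality: in a globular theory where $s=t\colon k+1\to k$, adjoining a lift of $(f,g)$ forces $f=g$.

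Citing Garner does not settle it. His construction hands you these coequalizers, and reading them as ``attach cells only for new squares'' presupposes that the stages are injective on squares. Appealing to the coincidence with Ara's reduced coherator is circular. The paper only asserts that coincidence, after the lemma, and comparing $AC$ with any coherator built from genuinely free stages needs the same analysis of the coequalizers.

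What is missing is an inductive proof that each $C_{n-1}\to C_n$ is faithful, at least on the hom-sets $C_{n-1}(\vec p,k)$. Two possible routes:
\begin{itemize}
\item a conservativity or normal-form argument for freely adjoined lifts;
\item exhibiting enough models of $C_{n-1}$ that extend to $C_n$ to separate the arrows of $C_{n-1}$.
\end{itemize}
With that in hand, $C_{n+1}$ is $C_n$ with lifts freely adjoined for the admissible pairs not coming from $C_{n-1}$ (all pairs when $n=0$), and the rest of your argument goes through.

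With Quillen's ordinary small object argument, each stage would be a pushout of $\coprod j_{\vec p,k}$ over all admissible pairs of $C_n$, and the lemma would be immediate. The coequalizers are precisely what adapting the argument to Garner's construction has to handle.
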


\begin{proof}
The proof is obtained by adapting the argument of Theorem 3.14 of \cite{Malt}.
\end{proof}

We refer to $AC$ as the \emph{algebraic coherator}. It coincides with the reduced coherator of Ara Example 2.12 \cite{Dim1}.

\section{Globularizing limit sketches, theories, and extensions} \label{glob_lim_sketch} 
\subsection*{The Setup}

Throughout this section, let $E$ be a realized limit sketch. Our goal is to combine globular theories with $E$-theories in a manner that preserves the structures arising from each independently. We first construct an intermediate category using the funny tensor product and then freely complete it under the required globular products and distinguished limits. 

\begin{definition}\label{glob_of_lim_sk}
The \emph{globularization} of $E$ is the realized limit sketch
\[
\Theta_0^\mathbf{op}\bullet E
\]
obtained as the universal realization of the product limit sketch
\[
\Theta_0^\mathbf{op}\times E,
\]
whose specified cones are the Cartesian product of the specified cones of
$\Theta_0^\mathbf{op}$ and $E$.
\end{definition}

The terminology \emph{globularization} follows \cite{Bressie2020}. Throughout this section we write
$\odot$ for the funny tensor product of categories.

Our next construction combines a globular theory with an $E$-theory by identifying the two structures only where required. Away from these specified identifications, the construction behaves like the funny tensor product.

\begin{construction}\label{glob_of_lim_sk_th}
Let
\[
H:E\to L
\]
be an $E$-theory and let
\[
K:\Theta_0^\mathbf{op}\to B
\]
be a globular theory. Define
\[
B\diamond L
\]
to be the colimit of the following diagram, where $\kappa_1$, $\kappa_2$, and $\kappa_3$ denote the canonical colimit maps.
\[
\begin{tikzcd}
\Theta_0^\mathbf{op}\odot E
\ar[dd,swap,"K\odot1_E"]
\ar[dr,swap,"q"]
\ar[rr,"1_{\Theta_0^\mathbf{op}}\odot H"]
&&
\Theta_0^\mathbf{op}\odot L
\ar[rd,"q"]
\ar[dd,"K\odot1_L" near start]
&
\\
&
\Theta_0^\mathbf{op}\times E
\ar[rr,"1_{\Theta_0^\mathbf{op}}\times H" near start]
\ar[dd,"K\times1_E" near start]
&&
\Theta_0^\mathbf{op}\times L
\ar[dd,"\kappa_1"]
\\
B\odot E
\ar[rr,"1_B\odot H" near start]
\ar[dr,swap,"q"]
&&
B\odot L
\ar[dr,"\kappa_3"]
&
\\
&
B\times E
\ar[rr,swap,"\kappa_2"]
&&
B\diamond L.
\end{tikzcd}
\]
\end{construction}

The above construction is functorial.

\begin{lemma}
Construction~\ref{glob_of_lim_sk_th} extends to a functor
\[
\diamond:
\mathbf{Th}_{\Theta_0^\mathbf{op}}
\times
\mathbf{Th}_E
\longrightarrow
(\Theta_0^\mathbf{op}\times E)/\mathbf{Cat}.
\]
\end{lemma}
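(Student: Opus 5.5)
We first say what the functor does on objects and morphisms, then check that it lands in the coslice and respects composition. On objects, a pair $(K:\Theta_0^\mathbf{op}\to B,\ H:E\to L)$ goes to $B\diamond L$, the colimit from Construction~\ref{glob_of_lim_sk_th}. Its structure map is
\[
\kappa_1:\Theta_0^\mathbf{op}\times E\xrightarrow{1\times H}\Theta_0^\mathbf{op}\times L\xrightarrow{\kappa_1} B\diamond L .
\]
Commutativity of the colimit cocone shows that this equals $\kappa_2\circ(K\times 1_E)$, so the two candidate structure maps agree. This choice fixes how objects land in $(\Theta_0^\mathbf{op}\times E)/\mathbf{Cat}$.

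For morphisms, take a map of globular theories $\phi:B\to B'$ under $\Theta_0^\mathbf{op}$, so $\phi K=K'$, and a map of $E$-theories $\psi:L\to L'$ under $E$, so $\psi H=H'$. We build a natural transformation between the two diagrams of Construction~\ref{glob_of_lim_sk_th}, one component for each of the eight vertices:
\begin{itemize}
\item the identity on $\Theta_0^\mathbf{op}\odot E$ and on $\Theta_0^\mathbf{op}\times E$;
\item $1\odot\psi$ on $\Theta_0^\mathbf{op}\odot L$ and $1\times\psi$ on $\Theta_0^\mathbf{op}\times L$;
\item $\phi\odot 1$ on $B\odot E$ and $\phi\times 1$ on $B\times E$;
\item $\phi\odot\psi$ on $B\odot L$.
\end{itemize}
Each square must commute. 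This follows from three facts: $\odot$ and $\times$ are bifunctors on $\mathbf{Cat}$; the comparison maps $q:X\odot Y\to X\times Y$ are natural in both variables; and the equations $\phi K=K'$, $\psi H=H'$ hold. Functoriality of colimits then gives a unique induced map $\phi\diamond\psi:B\diamond L\to B'\diamond L'$ commuting with all the $\kappa_i$.

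Next we check that $\phi\diamond\psi$ is a morphism in the coslice. The map $\phi\diamond\psi$ commutes with $\kappa_1$, and the component at $\Theta_0^\mathbf{op}\times L$ is $1\times\psi$. Together with $\psi H=H'$ this gives $(\phi\diamond\psi)\circ\kappa_1\circ(1\times H)=\kappa_1'\circ(1\times H')$, which is exactly compatibility with the structure maps.

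Identities and composition are preserved by the uniqueness clause of the colimit universal property. The pair $(1_B,1_L)$ induces a natural transformation whose components are all identities, so the induced map is the identity. For a composite, the natural transformation of diagrams induced by $(\phi'\phi,\psi'\psi)$ is the vertical composite of the two transformations, again by bifunctoriality of $\odot$ and $\times$. Uniqueness of induced maps between colimits then forces $(\phi'\phi)\diamond(\psi'\psi)=(\phi'\diamond\psi')(\phi\diamond\psi)$.

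The main obstacle is bookkeeping: confirming that the diagram really is a functor of $(K,H)$. This requires the naturality of $q:\odot\Rightarrow\times$, and we would state it as a lemma, proved from the universal property of the funny tensor product as a pushout of $\mathbf{ob}$-discretized products. It also requires all colimits to exist, which holds because $\mathbf{Cat}$ is cocomplete. Once that is in place, everything else is formal. To avoid choice issues, fix a colimit for each diagram once and for all.
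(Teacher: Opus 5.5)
Your proposal is correct and takes the same approach as the paper, whose proof is the one-line appeal to the universal property of the colimit. Your explicit natural transformation between the two diagrams, the coslice check via $\kappa_1\circ(1\times H)$, and the uniqueness argument for identities and composites simply unpack that appeal (one trivial slip: the diagram has seven vertices, not eight, since the eighth corner of the cube is the colimit $B\diamond L$ itself, and your list of components correctly has seven entries).
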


\begin{proof}
This follows immediately from the universal property of the colimit.
\end{proof}

The category $B\diamond L$ generally does not possess the globular products and distinguished limits required for our applications. We therefore freely complete it under these structures.

\begin{notation}\label{Uni_lim_compl_of_glob}
Given an $E$-theory
\[
H:E\to L
\]
and a globular theory
\[
K:\Theta_0^\mathbf{op}\to B,
\]
let
\[
\lambda_{B,L}:B\diamond L\longrightarrow B\bullet L
\]
denote the universal completion of $B\diamond L$ under globular products in the first variable and the specified limits of $E$ in the second.
\end{notation}

The defining universal property is the following.

\begin{lemma}
There is a category $B\bullet L$ together with a functor
\[
\lambda_{B,L}:B\diamond L\longrightarrow B\bullet L
\]
that preserves globular products in the first variable and the specified limit cones of $E$ in the second. Moreover, if
\[
R:B\diamond L\longrightarrow V
\]
preserves these structures, then there exists a unique functor
\[
R':B\bullet L\longrightarrow V
\]
preserving globular products in the first variable and the specified limit cones of $E$ in the second such that
\[
R'\circ\lambda_{B,L}=R.
\]
\end{lemma}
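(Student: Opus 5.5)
We will obtain $B\bullet L$ as a free completion of $B\diamond L$ under a \emph{small} set of prescribed cones, using the general theory of limit sketches. The first step is to assemble the data into a limit sketch $\mathcal S$ on the underlying category $B\diamond L$. For each object $b\in B$, the composite $\kappa_3(b,-)$ restricted along $H$ gives a diagram $E\to B\diamond L$. We include in $\mathcal S$ the image of each specified cone of $E$ under this diagram. Dually, for each $\ell\in L$ we include the image under $\kappa_3(-,\ell)$ of the canonical globular product cone of $\vec n$, for every table of dimensions $\vec n$. Since $B\diamond L$ is a colimit of small categories, it is small, and so $\mathcal S$ is a small limit sketch. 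Cones coming from $\Theta_0^{op}\times E$ are sent to the corresponding cones via $\kappa_1$ and $\kappa_2$, so the two descriptions agree on the overlap. This is exactly what the colimit in Construction~\ref{glob_of_lim_sk_th} was designed to ensure.

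The second step is to take $B\bullet L$ to be the \emph{theory of the sketch} $\mathcal S$. Concretely, we let $B\bullet L$ be the opposite of the full subcategory of $\mathbf{Mod}(\mathcal S)$, the $\mathcal S$-models in $\mathbf{Set}$, spanned by the finite, or more generally $\mathcal S$-small, colimits of representables. Equivalently, it is the closure of the image of $\mathcal S$ under the prescribed limits. We define $\lambda_{B,L}$ as the restricted Yoneda embedding followed by the reflection $[ (B\diamond L)^{op},\mathbf{Set}]\to\mathbf{Mod}(\mathcal S)$. This reflection exists because $\mathcal S$ is small and $\mathbf{Mod}(\mathcal S)$ is locally presentable, being an orthogonality class. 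Because the reflection inverts the comparison maps from the colimit of the representables of a cone to the representable of its apex, $\lambda_{B,L}$ sends each cone in $\mathcal S$ to a limit cone in $B\bullet L$. Hence $\lambda_{B,L}$ preserves globular products in the first variable and the specified cones of $E$ in the second.

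The third step is the universal property. Given $R:B\diamond L\to V$ preserving these structures, we define $R'$ on the closure by transfinite induction. Each new object is presented as an iterated prescribed limit, and $R'$ sends it to the corresponding limit in $V$, which exists because $R$ preserves the relevant cones. Functoriality comes from the universal property of limits at each stage. Uniqueness follows because every object of $B\bullet L$ is such an iterated limit of objects in the image of $\lambda_{B,L}$, and any structure-preserving $R'$ is forced on these limits up to the chosen limit cones. Strict uniqueness holds once we fix that $V$ comes with chosen limits and require $R'$ to preserve them strictly. Alternatively, we can invoke Kelly's theory of free completions under a class of limits relative to a sketch and cite it directly.

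The main obstacle is the interaction of the two families of cones. Once the free limits for $E$ are adjoined, new objects $b\bullet\ell$ appear, and we must ensure they again carry globular products in the first variable, and dually. A one-shot completion might therefore fail to be closed under both structures simultaneously. I would handle this by working entirely inside $\mathbf{Mod}(\mathcal S)$, where both kinds of limits are computed objectwise, and check that the closure stabilizes under a countable alternating iteration. Failing that, I would restate the universal property for \emph{essentially} unique $R'$, which is consistent with the earlier lemma on $\Theta_0^{op}$.
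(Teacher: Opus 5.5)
The paper gives no argument for this lemma; it is recorded as the defining property of the completion introduced just before it. Judged on its own, your proposal has a genuine gap, and it sits in Step~2.

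The full subcategory of $\mathbf{Mod}(\mathcal{S})$ on finite (or $\mathcal{S}$-small) colimits of reflected representables is the free \emph{finite-limit} theory of $\mathcal{S}$. Its universal property concerns finite-limit-preserving functors into finitely complete $V$. It is not equivalent to a closure under the prescribed limits. For example, it contains the equalizer of $(s,\ell),(t,\ell)\colon(1,\ell)\to(0,\ell)$, which in general is not the apex of any cone of $\mathcal{S}$. Such objects are not iterated prescribed limits, so the recipe of Step~3 breaks down on them:
\begin{itemize}
\item $R$ sending the cones of $\mathcal{S}$ to limit cones says nothing about other limits existing in the arbitrary category $V$.
\item $R'$ is only required to preserve the cones of $\mathcal{S}$, so nothing determines $R'$ on these objects. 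If the subcategory is replete, you can even move such an object to an isomorphic copy while fixing the image of $\lambda_{B,L}$, which breaks strict uniqueness.
\end{itemize}
Your last paragraph does not repair this. Hoping a countable alternating iteration stabilizes, or retreating to essential uniqueness with chosen limits in $V$, changes the statement rather than proving it.

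The missing observation is that nothing needs to be adjoined. As your own Step~1 shows, every cone of $\mathcal{S}$ already has its apex in $B\diamond L$. The globular product cones are $(K\vec{n},\ell)\to(Kn_i,\ell)$ and the $E$-cones are $(b,He)\to(b,He_j)$, all coming from $B\odot L$ via $\kappa_3$. So the interaction problem you flag never arises.

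Here is the construction that works. Let $B\bullet L$ have the objects of $B\diamond L$ and hom-sets $(B\bullet L)(c,d)=(r\,yc)(d)$. Here $yc=(B\diamond L)(c,-)$, and $r$ is the reflection of $[B\diamond L,\mathbf{Set}]$ onto $\mathbf{Mod}(\mathcal{S})$; note it is not $[(B\diamond L)^{op},\mathbf{Set}]$, since models are covariant. Equivalently, $B\bullet L$ is the identity-on-objects, fully faithful factorization of $c\mapsto r\,yc$ into $\mathbf{Mod}(\mathcal{S})^{op}$.
\begin{itemize}
\item Your Step~2 argument shows that the cones of $\mathcal{S}$ become limit cones there.
\item Given $R$ preserving them, each $V(Rc,R-)$ is an $\mathcal{S}$-model, because $V(Rc,-)$ preserves limits. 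Hence the action of $R$, a natural map $yc\to V(Rc,R-)$, extends uniquely along the unit $yc\to r\,yc$. This defines $R'$ on hom-sets.
\item Functoriality and uniqueness of $R'$ both follow from the uniqueness clause of the reflection.
\item $R'$ preserves the designated cones, because they are the images of those of $\mathcal{S}$.
\item $R'\circ\lambda_{B,L}=R$ holds on the nose, because $\lambda_{B,L}$ is the identity on objects.
\end{itemize}
This gives the strict uniqueness asserted in the lemma, for arbitrary $V$, with no transfinite induction and no chosen limits.
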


It is important to distinguish the constructions
\[
B\diamond L
\qquad\text{and}\qquad
B\bullet L.
\]
The category $B\diamond L$ is assembled from copies of
\[
B\times E
\qquad\text{and}\qquad
\Theta_0^\mathbf{op}\times L
\]
using the funny tensor product elsewhere, whereas $B\bullet L$ is obtained from $B\diamond L$ by freely adjoining the globular products and distinguished limits required of a globularized theory. This asymmetry is essential: it allows us to distinguish the order in which globular operations and the algebraic operations arising from a controlled theory are applied. This distinction will play a central role in the categorification that follows.

\begin{lemma}
The assignment
\[
(B,L)\longmapsto B\bullet L
\]
extends to a functor
\[
\bullet:
\mathbf{Th}_{\Theta_0^\mathbf{op}}
\times
\mathbf{Th}_E
\longrightarrow
\mathbf{Th}_{\Theta_0^\mathbf{op}\bullet E}.
\]
\end{lemma}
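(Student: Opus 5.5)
We will obtain the functor by composing the functor $\diamond$ of the preceding lemma with the universal completion $\lambda$, using the universal property of $B\bullet L$ to define the action on morphisms. Two things must be checked: that $B\bullet L$ carries a structure of $(\Theta_0^\mathbf{op}\bullet E)$-theory, and that morphisms of pairs induce theory morphisms compatibly with composition and identities.

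\emph{Objects.} The structure map is built from the composite
\[
\Theta_0^\mathbf{op}\times E\xrightarrow{\ q\ }\Theta_0^\mathbf{op}\times L\xrightarrow{\ \kappa_1\ } B\diamond L\xrightarrow{\ \lambda_{B,L}\ } B\bullet L,
\]
which also equals $\lambda_{B,L}\circ\kappa_2\circ(K\times 1_E)$ by commutativity of the colimit diagram in Construction~\ref{glob_of_lim_sk_th}. Its product cones are the products of a globular cone with a specified cone of $E$. In the $\Theta_0^\mathbf{op}$-direction this composite lands in $B\times E$, where $K$ sends globular products to globular products. In the $E$-direction it lands in $\Theta_0^\mathbf{op}\times L$, where $H$ preserves the specified cones. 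After applying $\lambda_{B,L}$, which preserves both families by the universal property stated above, we conclude that the composite sends the specified cones of $\Theta_0^\mathbf{op}\times E$ to limit cones. Since $\Theta_0^\mathbf{op}\bullet E$ is the universal realization of $\Theta_0^\mathbf{op}\times E$ (Definition~\ref{glob_of_lim_sk}), the composite factors uniquely through a cone-preserving functor $\Theta_0^\mathbf{op}\bullet E\to B\bullet L$. This gives the object part.

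\emph{Morphisms.} Let $(\phi,\psi):(B,L)\to(B',L')$ be a morphism in $\mathbf{Th}_{\Theta_0^\mathbf{op}}\times\mathbf{Th}_E$. The preceding lemma gives $\phi\diamond\psi:B\diamond L\to B'\diamond L'$ under $\Theta_0^\mathbf{op}\times E$. The composite $\lambda_{B',L'}\circ(\phi\diamond\psi)$ preserves globular products in the first variable and the specified $E$-cones in the second, because $\phi$ and $\psi$ are theory morphisms and $\lambda_{B',L'}$ preserves these structures. The universal property of $\lambda_{B,L}$ then yields a unique structure-preserving functor $\phi\bullet\psi:B\bullet L\to B'\bullet L'$ with $(\phi\bullet\psi)\lambda_{B,L}=\lambda_{B',L'}(\phi\diamond\psi)$. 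Compatibility with the structure maps out of $\Theta_0^\mathbf{op}\bullet E$ follows from the uniqueness clause of the universal realization, since both composites restrict to the same functor on $\Theta_0^\mathbf{op}\times E$. Functoriality, meaning preservation of identities and composites, follows from uniqueness in the universal property of $\lambda$ together with functoriality of $\diamond$.

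\emph{Main obstacle.} The delicate point is the structure-preservation claim: that $\lambda_{B',L'}\circ(\phi\diamond\psi)$ preserves globular products ``in the first variable'' and $E$-cones ``in the second''. This requires that $\phi\diamond\psi$ carries the copies $B\times E$ and $\Theta_0^\mathbf{op}\times L$ into the corresponding copies in $B'\diamond L'$. We will verify this directly from the colimit legs $\kappa_1$ and $\kappa_2$, using naturality of $q$ and of the funny tensor product. Note that the preserved structures only live on these two pieces, not on the funny-tensor part. We would also need to confirm that, for fixed variables, the relevant cones in $B\diamond L$ are exactly the images of cones in $B\times\{e\}$ and $\{\vec n\}\times L$, so that preservation can be checked leg by leg.
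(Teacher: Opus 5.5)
Your proof is correct. The morphism and functoriality parts are exactly the paper's: $\phi\bullet\psi$ is the unique factorization of $\lambda_{B',L'}\circ(\phi\diamond\psi)$ through $\lambda_{B,L}$, and identities and composites are preserved by uniqueness together with functoriality of $\diamond$.

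You differ from the paper in how you build the structure map. The paper never checks cones on $\Theta_0^{\mathbf{op}}\times E$ directly. It regards $(K,H)$ as the unique morphism from the initial pair $(1_{\Theta_0^{\mathbf{op}}},1_E)$ to $(K,H)$. It then obtains $K\bullet H:\Theta_0^{\mathbf{op}}\bullet E\to B\bullet L$ from the same universal property, now of $\lambda_{\Theta_0^{\mathbf{op}},E}$, applied to $\lambda_{B,L}\circ(K\diamond H)$. Since $\Theta_0^{\mathbf{op}}\diamond E\cong\Theta_0^{\mathbf{op}}\times E$, this is the same functor as your composite. The gain is that the structure map becomes a special case of the morphism construction. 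Compatibility $(\phi\bullet\psi)\circ(K\bullet H)=K'\bullet H'$ then follows from $(\phi\diamond\psi)\circ(K\diamond H)=K'\diamond H'$ and uniqueness, with no separate check.

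Your route instead uses the universal realization of the product sketch from Definition~\ref{glob_of_lim_sk}. That is the object actually named in the statement; the paper tacitly identifies it with the codomain of $\lambda_{\Theta_0^{\mathbf{op}},E}$. The cost is one step you leave implicit. A product cone $c_1\times c_2$ is sent to a limit because its image is a limit in each variable separately, with the other variable fixed at any object. Limits over $I_1\times I_2$ are then computed iteratively.

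Two small points:
\begin{itemize}
\item The first arrow of your composite should be $1_{\Theta_0^{\mathbf{op}}}\times H$, not $q$; in the paper $q$ is the comparison map from $\odot$ to $\times$.
\item What you flag as the main obstacle, that $\phi\diamond\psi$ carries the $\kappa_1,\kappa_2,\kappa_3$ pieces into the corresponding pieces so that the composite still preserves the required cones, is simply assumed in the paper. Your leg-by-leg verification is the right way to supply it.
\end{itemize}
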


\begin{proof}
Suppose
\[
H:E\to L
\]
is an $E$-theory and
\[
K:\Theta_0^\mathbf{op}\to B
\]
is a globular theory. By the universal property of
\[
\lambda_{\Theta_0^\mathbf{op},E}:
\Theta_0^\mathbf{op}\diamond E
\longrightarrow
\Theta_0^\mathbf{op}\bullet E
\]
and the functoriality of $\diamond$, there exists a unique morphism
\[
K\bullet H:
\Theta_0^\mathbf{op}\bullet E
\longrightarrow
B\bullet L
\]
such that the diagram
\[
\begin{tikzcd}
\Theta_0^\mathbf{op}\diamond E
\ar[r,"\lambda_{\Theta_0^\mathbf{op},E}"]
\ar[d,swap,"K\diamond H"]
&
\Theta_0^\mathbf{op}\bullet E
\ar[d,"K\bullet H"]
\\
B\diamond L
\ar[r,swap,"\lambda_{B,L}"]
&
B\bullet L
\end{tikzcd}
\]
commutes.

By abuse of notation, we continue to denote by
\[
K\bullet H
\]
the composite
\[
(K\bullet H)\circ
\lambda_{\Theta_0^\mathbf{op},E}.
\]
Functoriality follows immediately from the universal property of $\bullet$ together with the functoriality of $\diamond$.
\end{proof}

\begin{definition}
Given an $E$-theory
\[
H:E\to L
\]
and a globular theory
\[
K:\Theta_0^\mathbf{op}\to B,
\]
composition with
\[
K\bullet H
\]
defines the pullback functor
\[
(K,H)^*:
\mathbf{Th}_{B\bullet L}
\longrightarrow
\mathbf{Th}_{\Theta_0^\mathbf{op}\bullet E},
\]
given on objects by
\[
(K,H)^*(R)=R\circ(K\bullet H).
\]
\end{definition}

We now isolate the portion of a $(B\bullet L)$-theory determined by the underlying $E$-structure.

\begin{definition}\label{the_zero_cell_level}
Let
\[
R:B\bullet L\to V
\]
be a $(B\bullet L)$-theory. The \emph{$0$-cell level} of $V$ is the full subcategory determined by the image of the functor
\[
(0,-):L\longrightarrow V.
\]
\end{definition}

\begin{lemma}
The map
\[
(0,-):E\longrightarrow B
\]
endows the $0$-cell level with the structure of an $E$-theory. Moreover,
\[
(0,-):L\longrightarrow V
\]
is a morphism of $E$-theories onto the $0$-cell level.
\end{lemma}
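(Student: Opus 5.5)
The plan is to read the statement as concerning the composite
\[
E\xrightarrow{\;(0,-)\;}\Theta_0^\mathbf{op}\bullet E\xrightarrow{\;R\circ (K\bullet H)\;}V,
\]
where the first map is the inclusion of $E$ at the globular object $0$. It factors through $L$ via $H$ as $(0,-)\circ H$, and lands in the $0$-cell level by definition. (The map written $E\to B$ in the statement should be read in this way.) The structure to produce is therefore a functor $E\to V_0$, where $V_0$ denotes the $0$-cell level. We must show that this functor sends specified cones of $E$ to limit cones in $V_0$, and that $(0,-):L\to V_0$ commutes with the structure maps and preserves the relevant cones.

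First, the specified cones of $\Theta_0^\mathbf{op}\bullet E$ include every cone $(0,c)$ with $c$ a specified cone of $E$. This holds because $0$, regarded as the trivial table of dimensions $(0)$, has the identity cone as a specified globular-product cone, and the cones of the product sketch are Cartesian products of specified cones. The theory $R$ is a $(B\bullet L)$-theory and $K\bullet H$ preserves specified cones by construction (via $\lambda$). Hence $R\circ(K\bullet H)$ sends $(0,c)$ to a limit cone in $V$. All vertices of such a cone lie in the image of $(0,-)$, so they lie in the full subcategory $V_0$. A limit cone in $V$ whose vertices lie in a full subcategory is a limit cone there, so the composite $E\to V_0$ is an $E$-theory. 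Here I would also check that being a theory requires only this cone preservation (together with whatever bijective-on-objects condition \cite{Taylor2026Controlled} imposes, which would be handled by taking the image).

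Second, $(0,-):L\to V_0$ commutes with the two maps out of $E$, by the commutativity of the square defining $K\bullet H$ restricted to $\{0\}\times E$. It also preserves the specified cones, because $L\to\Theta_0^\mathbf{op}\times L\xrightarrow{\kappa_1}B\diamond L\to B\bullet L$ sends $H(c)$ to $\lambda$ of a specified cone, which is then preserved by $R$. This makes $(0,-)$ a morphism of $E$-theories.

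The main obstacle I expect is the first step. One has to confirm that $\lambda_{B,L}$ genuinely preserves the cones $(0,c)$ coming from the $\Theta_0^\mathbf{op}\times L$ corner, and that these cones are not distorted by the funny-tensor identifications in $B\diamond L$. I would handle this through the universal property of $\lambda_{B,L}$ stated in the preceding lemma, applied to the specified cones of $E$ in the second variable at the fixed first variable $0$.
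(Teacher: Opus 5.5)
The paper gives no proof of this lemma; it is stated as immediate from the definitions, so there is no argument of the author's to compare against. Your proposal supplies the routine verification, and it is sound once the misprinted $(0,-):E\to B$ is read, as you do, as the composite
\[
E\xrightarrow{H}L\xrightarrow{(0,-)}B\bullet L\xrightarrow{R}V.
\]
The three ingredients are the right ones:
\begin{itemize}
\item $R$ preserves the cones $(0,H(c))$;
\item a full subcategory reflects limit cones that lie in it;
\item the compatibility $(K\bullet H)\circ(0,-)=(0,-)\circ H$ comes from the square defining $K\bullet H$.
\end{itemize}

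Two small remarks. First, you do not need the detour through $\Theta_0^\mathbf{op}\bullet E$ and the length-one table $(0)$. The cone $(0,H(c))$ is already one of the specified cones of $B\bullet L$ in the second variable, so $R$ preserves it by definition. Second, if $E$-theories are required to be bijective on objects, taking the image only gives surjectivity onto $V_0$. Injectivity of $x\mapsto R(0,x)$ has to come from $R$ (and $K\bullet H$) being bijective on objects, so that the distinct objects $(0,x)$ remain distinct in $V$.
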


\begin{notation}\label{full_sub_iso_on_full_image}
We write
\[
\mathbf{Th}^{\cong}_{B\bullet L}
\]
for the full subcategory of
\[
\mathbf{Th}_{B\bullet L}
\]
whose objects are those $(B\bullet L)$-theories
\[
R:B\bullet L\to V
\]
for which
\[
(0,-):L\longrightarrow V
\]
is an isomorphism of $E$-theories onto the $0$-cell level.
\end{notation}

The preceding construction admits an analogous version over an arbitrary object of $L$.

\begin{definition}
Let
\[
R:B\bullet L\to V
\]
be a $(B\bullet L)$-theory and let $w\in\mathbf{Ob}(L)$. The \emph{$w$-slice} of $V$ is the full subcategory determined by the image of the functor
\[
(-,w):B\longrightarrow V.
\]
\end{definition}
\subsection*{Generalized Contractibility}
We begin by introducing a generalized notion of contractibility. The first step is to associate to every morphism two canonical morphisms whose source and target both lie in dimension~$0$.

Throughout this section, fix a limit sketch $E$, a category $S$ with
$\mathbf{Ob}(S)=\mathbf{Ob}(E)$, two $E$-theories $L$ and $L'$, a faithful identity-on-objects functor
\[
\phi:S\to L,
\]
a full morphism of $E$-theories
\[
\gamma:L\to L',
\]
and an $(\infty,0)$-coherator
\[
K:\Theta_0^\mathbf{op}\to B.
\]

\begin{definition}\label{gen_admissibility_cond}
Suppose that
\[
R:B\bullet L\to V
\]
is an object of $\mathbf{Th}_{B\bullet L}^{\cong}$ and
\[
f:(\vec{p},x)\to(n,y)
\]
is a morphism of $V$. The \emph{bottom source} and \emph{bottom target} of $f$ are the composites
\[
(0,x)\xrightarrow{(Z^{\vec{p}},x)}
(\vec{p},x)
\xrightarrow{f}
(n,y)
\xrightarrow{(s^n,y)}
(0,y)
\]
and
\[
(0,x)\xrightarrow{(Z^{\vec{p}},x)}
(\vec{p},x)
\xrightarrow{f}
(n,y)
\xrightarrow{(t^n,y)}
(0,y),
\]
respectively, where $Z^{\vec p}:0\to\vec p$ is defined in Notation~\ref{notation_for_induced_ids}. We denote these morphisms by $U_s(f)$ and $U_t(f)$, respectively.
\end{definition}

The following observation is immediate.

\begin{lemma}
Suppose that
\[
f:(\vec p,x)\to(0,y)
\]
is a morphism of $V$. Then
\[
U_s(f)=U_t(f).
\]
\end{lemma}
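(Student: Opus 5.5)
The claim is that $U_s(f)=U_t(f)$ whenever the target of $f$ has dimension $0$. The plan is to unwind Definition~\ref{gen_admissibility_cond} in the case $n=0$ and observe that the two composites differ only in their last factor, $(s^0,y)$ versus $(t^0,y)$. Both of these factors are endomorphisms of $(0,y)$, so it suffices to show that $(s^0,y)=(t^0,y)=1_{(0,y)}$ in $V$.

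First we fix the convention for $s^n$ and $t^n$. These are the iterated source and target maps $n\to 0$ in $\Theta_0^\mathbf{op}$, so that $s^n=s\circ s\circ\cdots\circ s$ with $n$ factors, and similarly for $t^n$. This matches their use in Definition~\ref{gen_admissibility_cond}, where they take $(n,y)$ to $(0,y)$. With this convention the empty composite gives $s^0=t^0=1_0$ in $\Theta_0^\mathbf{op}$. It is the same convention that makes $Z^0$ the identity in Notation~\ref{notation_for_induced_ids}.

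Next we transport these identities into $V$. The morphism $(s^0,y)$ is the image under the structure functor $R\circ\lambda_{B,L}\circ\kappa_2$ (or $\kappa_1$, from $\Theta_0^\mathbf{op}\times L$) of the morphism $(1_0,1_y)$. Functors preserve identities, so $(s^0,y)=(t^0,y)=1_{(0,y)}$. Both bottom maps therefore equal $f\circ(Z^{\vec p},x)$, and hence $U_s(f)=U_t(f)$.

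The only genuine point is the convention for $s^0$ and $t^0$; everything else is immediate. If instead one reads $s^n,t^n$ as general source and target maps $n\to n-1$ (which is undefined for $n=0$), the statement has to be interpreted through the iterated convention, and I would say so explicitly in the proof. Nothing else in the argument depends on $V$ lying in $\mathbf{Th}^{\cong}_{B\bullet L}$ or on the coherator structure of $B$.
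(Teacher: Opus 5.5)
Your proposal is correct and is essentially the paper's own proof. The paper likewise observes that $s^0=t^0=\mathrm{id}_0$, so both bottom maps reduce to $f\circ(Z^{\vec p},x)$; your remarks on the iterated-composite convention and on functors preserving identities only make explicit what the paper leaves implicit.
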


\begin{proof}
Since $s^0=t^0=\mathrm{id}_0$,
\[
U_s(f)
=(s^0,y)\circ f\circ(Z^{\vec p},x)
=f\circ(Z^{\vec p},x)
=(t^0,y)\circ f\circ(Z^{\vec p},x)
=U_t(f).
\]
\end{proof}

Accordingly, whenever the codomain of $f$ is $(0,y)$, we simply write
\[
U(f):=U_s(f)=U_t(f).
\]

We now define the admissible pairs that will be used throughout the remainder of this paper.

\begin{definition}
Let $R:B\bullet L\to V$ be an object of $\mathbf{Th}_{B\bullet L}^{\cong}$. An \emph{$S$-admissible pair} of $n$-cells in $V$ is a pair of morphisms
\[
\begin{tikzcd}
(\vec{p},x)
\ar[r,shift left=.75ex,"f"]
\ar[r,shift right=.75ex,swap,"g"]
&
(n,y)
\end{tikzcd}
\]
where $\mathbf{hgt}(\vec p)\le n+1$, such that either

\begin{itemize}
\item $n=0$ and
\[
U(f),U(g)\in\operatorname{im}(0,\phi(-)),
\]
or

\item $n\ge1$,
\[
(s,y)\circ f=(s,y)\circ g,
\qquad
(t,y)\circ f=(t,y)\circ g,
\]
and
\[
U_s(f),U_s(g),U_t(f),U_t(g)
\in\operatorname{im}(0,\phi(-)).
\]
\end{itemize}
\end{definition}

Thus, an $S$-admissible pair is one whose bottom source and bottom target arise faithfully from the category $S$.

\begin{definition}\label{gen_admiss_pair}
Let $R:B\bullet L\to V$ be an object of $\mathbf{Th}_{B\bullet L}^{\cong}$. An \emph{$(S,L')$-admissible pair} of $n$-cells is an $S$-admissible pair
\[
\begin{tikzcd}
(\vec{p},x)
\ar[r,shift left=.75ex,"f"]
\ar[r,shift right=.75ex,swap,"g"]
&
(n,y)
\end{tikzcd}
\]
such that either $n\ge1$ or
\[
\gamma(U(f))=\gamma(U(g)).
\]
\end{definition}

The additional condition requires the bottom morphisms to become identified after applying $\gamma$. These are precisely the admissible pairs for which we require lifts.

\begin{definition}
Let $R:B\bullet L\to V$ be an object of $\mathbf{Th}_{B\bullet L}^{\cong}$. A \emph{lift} of an $S$-admissible pair is an arrow
\[
\kappa:(\vec p,x)\to(n+1,y)
\]
such that the following diagram commutes.
\[
\begin{tikzcd}
&
(n+1,y)
\ar[d,shift left=.75ex,"(t{,}y)"]
\ar[d,shift right=.75ex,swap,"(s{,}y)"]
\\
(\vec p,x)
\ar[r,shift left=.75ex,"f"]
\ar[r,shift right=.75ex,swap,"g"]
\ar[ru,"\kappa"]
&
(n,y)
\end{tikzcd}
\]
\end{definition}

\begin{definition}\label{gen_contract_yay}
Let $R:B\bullet L\to V$ be an object of $\mathbf{Th}_{B\bullet L}^{\cong}$. We say that $R$ is \emph{$S$-contractible} if every $S$-admissible pair admits a lift. We say that $R$ is \emph{$(S,L')$-contractible} if every $(S,L')$-admissible pair admits a lift.
\end{definition}

\subsection*{Construction to add S-lifts is Well-Behaved}
```latex
\begin{definition}
Let $R:B\bullet L\to V$ be an object of $\mathbf{Th}_{B\bullet L}^{\cong}$, and let $\mathcal{U}$ be a set of $S$-admissible pairs in $V$. We write
\[
R_{\mathcal{U}}:B\bullet L\to V[\mathcal{U}]
\]
for the $(B\bullet L)$-theory obtained by freely adjoining a lift to every $S$-admissible pair in $\mathcal{U}$. We denote the induced morphism of $(B\bullet L)$-theories by
\[
\begin{tikzcd}
&B\bullet L\ar[ld,swap,"R"]\ar[dr,"R_\mathcal{U}"]&\\
V\ar[rr,swap,"e_\mathcal{U}"]&&V[\mathcal{U}].
\end{tikzcd}
\]
\end{definition}

\begin{theorem}\label{freely_add_lifts}
Let $R:B\bullet L\to V$ be an object of $\mathbf{Th}_{B\bullet L}^{\cong}$, and let $\mathcal{U}$ be a set of $S$-admissible pairs in $V$. Then $R_\mathcal{U}$ is an object of $\mathbf{Th}_{B\bullet L}^{\cong}$, and
\[
e_\mathcal{U}:V\to V[\mathcal{U}]
\]
is a morphism in $\mathbf{Th}_{B\bullet L}^{\cong}$.
\end{theorem}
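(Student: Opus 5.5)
The plan is to realize $V[\mathcal U]$ as an explicit pushout in $\mathbf{Th}_{B\bullet L}$ and then check directly that the $0$-cell level is unchanged. For each pair $(f,g)\in\mathcal U$ with domain $(\vec p,x)$ and codomain $(n,y)$, I first form the ``sphere'' and ``disk'' theories over $B\bullet L$, namely $S^{B\bullet L}_{(\vec p,x),(n,y)}$ and $D^{B\bullet L}_{(\vec p,x),(n,y)}$. These are defined exactly as $S_{\vec p,k}$ and $D_{\vec p,k}$ in Definitions~\ref{spheres_in_inf_gpds} and~\ref{disks_in_inf_gpds}, with $\Theta_0^\mathbf{op}$ replaced by $B\bullet L$. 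The pair $(f,g)$ is then classified by a map $S\to V$, and
\[
V[\mathcal U]=V\sqcup_{\coprod S}\coprod D,
\]
taken in $\mathbf{Th}_{B\bullet L}$. That this pushout exists and is computed by freely adjoining generators and relations and then closing under the required limits follows as in Lemma~\ref{admiss_1}, because these theories are presentable. The map $e_\mathcal U$ is the pushout coprojection, so it is a morphism of $(B\bullet L)$-theories.

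Since $\mathbf{Th}^{\cong}_{B\bullet L}$ is a full subcategory (Notation~\ref{full_sub_iso_on_full_image}), it only remains to show that $R_\mathcal U$ lies in it. That is, $(0,-):L\to V[\mathcal U]$ must be an isomorphism of $E$-theories onto its $0$-cell level. Because $(0,-)$ factors as $e_\mathcal U\circ(0,-)_V$, and $(0,-)_V$ is already an isomorphism onto the $0$-cell level of $V$, this reduces to a single claim: $e_\mathcal U$ restricts to an isomorphism between the full subcategories on objects $(0,w)$. Surjectivity on objects is clear, since adjoining lifts creates no new objects of the form $(0,w)$. The work is in proving fullness and faithfulness on these hom-sets.

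For that, I would use a normal-form (or grading) argument. Every new morphism of $V[\mathcal U]$ is a composite of old morphisms and the adjoined generators $\kappa_{f,g}:(\vec p,x)\to(n+1,y)$, closed under globular products and $E$-limits. The codomain of $\kappa$ has dimension $n+1\ge1$. So a new morphism $(0,w)\to(0,w')$ involving $\kappa$ must be post-composed with some map $(n+1,y)\to(0,w')$. In a globular theory, any map out of a positive-dimensional object to dimension $0$ factors through iterated sources or targets, by the structure of $B$ and globular products. Hence such a composite factors as $(s,y)\circ\kappa$ or $(t,y)\circ\kappa$, which the lift relations identify with $f$ or $g$, morphisms already in $V$. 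Iterating this gives a rewriting that eliminates every occurrence of $\kappa$ from morphisms between $0$-cell objects, which establishes fullness. For faithfulness, I would build a retraction at the $0$-cell level: define a $(B\bullet L)$-theory $W$ receiving $V$ in which the pairs in $\mathcal U$ receive lifts, while the $0$-cell level is preserved. One option is the theory obtained by freely adjoining the lifts and then checking that the relations imposed only involve positive-dimensional cells. The universal property of the pushout then yields $V[\mathcal U]\to W$, which shows that no two distinct $0$-level maps of $V$ are identified.

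I expect the main obstacle to be the faithfulness step, since the free completion under globular products and $E$-limits could in principle create new identifications among $0$-cell maps. This is where the $S$-admissibility hypothesis enters. Because $U_s(f)$, $U_t(f)$, $U_s(g)$ and $U_t(g)$ all lie in the image of $(0,\phi(-))$ and $\phi$ is faithful, the bottom boundary data of each adjoined lift is already coherent at the $0$-cell level. Consequently, adjoining $\kappa$ imposes no equation between distinct $0$-level morphisms. In the case $n=0$ this requires the care of checking that $s\kappa=f$ and $t\kappa=g$ do not force $U(f)=U(g)$. I would verify this by constructing $W$ explicitly as a model in which the lift is interpreted as a formal ``path'' object in dimension $1$, leaving dimension $0$ untouched.
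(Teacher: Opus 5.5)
Your reduction is fine: $V[\mathcal U]$ is a pushout, $\mathbf{Th}^{\cong}_{B\bullet L}$ is full, and everything comes down to $e_{\mathcal U}$ being bijective on hom-sets between objects $(0,w)$. But both halves of that bijectivity rely on properties of $V$ that membership in $\mathbf{Th}^{\cong}_{B\bullet L}$ does not supply. That condition only constrains hom-sets \emph{between} $0$-level objects.

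\emph{Fullness.} You use that every morphism $(n+1,y)\to(0,w')$ factors through iterated $(s,-)$ or $(t,-)$. This holds in the coherator $B$, but not in an arbitrary $V$. For example, adjoin to $B\bullet L$ a map $c:(1,y)\to(0,y)$ subject to $c\circ h=(s,y)\circ h$ for every $h$ out of a $0$-level object. Your own rewriting, together with the retraction $c\mapsto(s,y)$, shows this $V$ lies in $\mathbf{Th}^{\cong}_{B\bullet L}$ whenever $B\bullet L$ does. Now take the $S$-admissible pair $f=g=\mathrm{id}_{(0,y)}$. Then $c\circ\kappa$ is a new endomorphism of $(0,y)$. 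To see this with $L=\mathcal F$, interpret everything in the fundamental $\infty$-groupoid of $\mathbf{R}$, with lifts of constant data chosen constant. Let $c$ be evaluation at the midpoint of $D^1$ and let $\kappa(x)$ be a non-constant loop at $x$. Then $c\circ\kappa\neq\mathrm{id}$, while $\mathcal F(1,1)=\{\mathrm{id}\}$.

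\emph{Faithfulness.} Your first $W$ is just $V[\mathcal U]$ again, and the faithfulness of $\phi$ plays no role. The relations $(s,y)\circ\kappa=f$ and $(t,y)\circ\kappa=g$ force $r\circ f=r\circ g$ whenever $r\circ(s,y)=r\circ(t,y)$ in $V$. This can happen inside $\mathbf{Th}^{\cong}_{B\bullet L}$. Take $L$ free on a unary operation $r$ and quotient $B\bullet L$ by $(0,r)\circ(s,1)=(0,r)\circ(t,1)$. Discrete $\infty$-groupoids on free $L$-algebras show the $0$-level is still $L$. The pair $(0,\pi_1),(0,\pi_2):(0,2)\to(0,1)$ is $S$-admissible when the projections lie in $\operatorname{im}\phi$. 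Adjoining its lift identifies $r\pi_1$ with $r\pi_2$. So no model $W$ of the kind you describe exists in general.

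What is missing is an invariant on $V$ stronger than membership in $\mathbf{Th}^{\cong}_{B\bullet L}$. One candidate is that the canonical maps $(B\bullet L)((\vec q,w),(0,w'))\to V((\vec q,w),(0,w'))$ are bijective. Informally, every map into a $0$-level object is some $(0,\ell)$ after a structural map of $\Theta_0^{\mathbf{op}}$, with no extra relations. This plausibly holds for $B\bullet L$ and for the stages $\mathbf{UGR}(\Omega)_n$ built from it by adjoining lifts. Your rewriting argument is the natural tool for propagating it along $e_{\mathcal U}$.

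The paper's proof does not supply this invariant either. It consists of your key observation, that $U_s(\delta_{f,g})$ and $U_t(\delta_{f,g})$ lie in $\operatorname{im}(0,\phi(-))$, followed by the assertion that the $0$-cell level is unchanged. That controls the boundary of the new generator, but not the new composites or identifications above. As stated, the theorem needs such an extra hypothesis on $V$.
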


\begin{proof}
Suppose
\[
\begin{tikzcd}
(\vec{p},x)\ar[r,shift left=.75ex,"f"]
\ar[r,shift right=.75ex,swap,"g"]&
(n,y)
\end{tikzcd}
\]
is an element of $\mathcal{U}$. We freely adjoin a lift
\[
\delta_{f,g}:(\vec{p},x)\to(n+1,y)
\]
satisfying
\[
(s,y)\circ\delta_{f,g}=f
\]
and
\[
(t,y)\circ\delta_{f,g}=g.
\]
Since
\[
U_s(\delta_{f,g}),\,U_t(\delta_{f,g})
\in\operatorname{im}(0,\phi(-))
\subseteq\operatorname{im}(0,-),
\]
the inclusion of the $0$-cell level is unchanged. Consequently,
\[
(0,-):L\longrightarrow V[\mathcal{U}]
\]
remains an isomorphism of $E$-theories onto the $0$-cell level of $V[\mathcal{U}]$. Hence $R_\mathcal{U}$ is an object of $\mathbf{Th}_{B\bullet L}^{\cong}$, and $e_\mathcal{U}$ is a morphism in $\mathbf{Th}_{B\bullet L}^{\cong}$.
\end{proof}

The construction above satisfies the expected universal property.

\begin{lemma}\label{uni_prop_of_ext}
Suppose
\[
\begin{tikzcd}
&B\bullet L\ar[ld,swap,"R"]\ar[dr,"K"]&\\
V\ar[rr,swap,"H"]&&W
\end{tikzcd}
\]
is a morphism in $\mathbf{Ex}_{B\bullet L}$ such that, for every $S$-admissible pair
\[
\begin{tikzcd}
(\vec{p},x)\ar[r,shift left=.75ex,"f"]
\ar[r,shift right=.75ex,swap,"g"]&
(n,y)
\end{tikzcd}
\]
in $\mathcal{U}$, there is a chosen lift
\[
\delta^W_{f,g}:(\vec{p},x)\to(n+1,y)
\]
in $W$. Then there exists a unique morphism of extensions
\[
H':V[\mathcal{U}]\to W
\]
such that
\[
H'\circ e_\mathcal{U}=H
\]
and
\[
H'(\delta_{f,g})=\delta^W_{f,g}
\]
for every $S$-admissible pair
\[
\begin{tikzcd}
(\vec{p},x)\ar[r,shift left=.75ex,"f"]
\ar[r,shift right=.75ex,swap,"g"]&
(n,y)
\end{tikzcd}
\]
in $\mathcal{U}$.
\end{lemma}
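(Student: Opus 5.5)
The plan is to make the phrase ``freely adjoining a lift'' precise as a pushout, and then read the universal property off that pushout. For each pair $(f,g)\in\mathcal U$, with $f,g:(\vec p,x)\to(n,y)$, I would build a \emph{generic} version of the data inside $\mathbf{Th}_{B\bullet L}$. First take $S^{L}_{\vec p,x;n,y}$, obtained from $R_0:B\bullet L\to B\bullet L$ by freely adjoining a parallel pair $(\vec p,x)\rightrightarrows(n,y)$ satisfying the boundary relations. Then take $D^{L}_{\vec p,x;n,y}$, obtained from it by further adjoining a filler $(\vec p,x)\to(n+1,y)$. These are the analogues of Definitions~\ref{spheres_in_inf_gpds} and \ref{disks_in_inf_gpds}, and they are presentable by the same argument as Lemma~\ref{admiss_1}.

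The pair $(f,g)$ is then the same thing as a morphism $S^{L}_{\vec p,x;n,y}\to V$ under $B\bullet L$. I would define $V[\mathcal U]$ as the pushout of
\[
\coprod_{\mathcal U} S^{L}\to V
\qquad\text{along}\qquad
\coprod_{\mathcal U} j:\coprod_{\mathcal U} S^L\to\coprod_{\mathcal U}D^L,
\]
with coproducts and pushouts taken in $\mathbf{Th}_{B\bullet L}$. Equivalently, $V[\mathcal U]$ is a presentation by generators and relations of the limit-preserving completion. The map $e_\mathcal U$ is the pushout leg, and $\delta_{f,g}$ is the image of the generic filler.

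To prove the lemma, take $H:V\to W$ and the chosen lifts $\delta^W_{f,g}$. Each chosen lift determines a morphism $D^{L}_{\vec p,x;n,y}\to W$ under $B\bullet L$, by the freeness of $D^L$ as a theory generated by one arrow subject to two equations. This morphism restricts along $j$ to $H\circ(f,g)$, because $\delta^W_{f,g}$ satisfies
\[
(s,y)\delta^W_{f,g}=H(f),\qquad (t,y)\delta^W_{f,g}=H(g).
\]
Here I read the hypothesis ``lift in $W$'' as ``lift of $(H f,Hg)$''. These compatible maps out of the coproduct then induce $H'$ by the pushout property, giving $H'e_\mathcal U=H$ and $H'(\delta_{f,g})=\delta^W_{f,g}$. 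For uniqueness, any other $H''$ with the same two properties agrees with $H'$ on both legs of the pushout, since a map out of $D^L$ is determined by where the generic filler goes. So $H''=H'$ by the uniqueness clause of the pushout.

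The remaining point, and the main obstacle, is that morphisms in $\mathbf{Ex}_{B\bullet L}$ must be compatible with the structure maps from $B\bullet L$. We also want $H'$ to be a morphism of extensions in the sense of the paper, not merely of theories. Compatibility with the structure maps holds because all the constructions above live in the coslice under $B\bullet L$: $K=H\circ R$, and $R_\mathcal U=e_\mathcal U\circ R$. The genuinely delicate part is justifying that the free constructions and pushouts exist in $\mathbf{Th}_{B\bullet L}$, which is locally presentable as a category of models of a limit sketch. I would also need to check that, if extensions are required to stay in $\mathbf{Th}^{\cong}_{B\bullet L}$, the pushout agrees with the one computed there. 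This last point is exactly what Theorem~\ref{freely_add_lifts} supplies, since $V[\mathcal U]$ lies in $\mathbf{Th}^{\cong}_{B\bullet L}$. Because $\mathbf{Th}^{\cong}_{B\bullet L}$ is a full subcategory, the universal property restricts to it.
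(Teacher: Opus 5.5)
Your argument is correct. The paper gives no proof of this lemma; it presents it only as ``the expected universal property'' of freely adjoining lifts. Your presentation of $V[\mathcal{U}]$ as the pushout of $\coprod_{\mathcal{U}} S^L\to V$ along $\coprod_{\mathcal{U}} j$ therefore supplies the missing argument. It is also the mechanism the paper relies on implicitly when it later feeds the generic inclusions $j^{\vec{p},k}_{(\lambda,\delta)}$ into Garner's small object argument. Your reading of the hypothesis as ``$\delta^W_{f,g}$ is a lift of $(H(f),H(g))$'' is forced, since $H'(\delta_{f,g})$ must have source $H'(e_{\mathcal{U}}(f))=H(f)$ and target $H(g)$.

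The one step you leave open is the passage from $\mathbf{Th}_{B\bullet L}$ to $\mathbf{Ex}_{B\bullet L}$. The lemma tests against an arbitrary extension $K:B\bullet L\to W$. Your pushout and the freeness of $S^L$ and $D^L$ are universal properties against theories only. Compatibility with the structure maps, which you do check, is not the issue.

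To close this, assume, as the notation indicates, that theories are the identity-on-objects extensions. Factor $K$ through its full image $W^{\flat}$, which has the objects of $B\bullet L$ and hom-sets $W^{\flat}(a,b)=W(Ka,Kb)$.
\begin{itemize}
\item The induced functor $B\bullet L\to W^{\flat}$ is a theory, because the fully faithful functor $W^{\flat}\to W$ reflects the limit cones that $K$ produces.
\item A morphism of extensions from a theory into $W$ is the same thing as a morphism of theories into $W^{\flat}$.
\item The chosen lifts $\delta^W_{f,g}$ already live in $W^{\flat}$.
\end{itemize}
Hence $\mathbf{Th}_{B\bullet L}$ is coreflective in $\mathbf{Ex}_{B\bullet L}$, its inclusion preserves your pushout, and existence and uniqueness of $H'$ follow as you describe.

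The appeal to $\mathbf{Th}^{\cong}_{B\bullet L}$ and Theorem~\ref{freely_add_lifts} is not needed. Your generic spheres are not objects of $\mathbf{Th}^{\cong}_{B\bullet L}$, since the free pair creates new bottom sources and targets, so there is no second pushout to compare with. As your final sentence says, a universal property against all extensions simply restricts to any full subcategory of targets.
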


\section{Infinity Lawvere theories}\label{inf_Law_Chapter}
\subsection*{Infinity-Lawvere Theories and Infinity-groupoids Re-imagined}\label{Inf_Law_Th_in}
We begin this section by defining the category of $\infty$-Lawvere theories and showing that every model of an $\infty$-Lawvere theory has an underlying Grothendieck $\infty$-groupoid.

\begin{notation}
Let $AC$ denote the algebraic coherator and consider the identity functor
\[
1_{\mathcal{F}}:\mathcal{F}\to\mathcal{F}.
\]
We write
\[
\mathcal{F}^\infty:=AC\bullet\mathcal{F}
\]
for the universal limit sketch realization of $AC\times\mathcal{F}$.
\end{notation}

\begin{notation}\label{def_of_inf_Law_Th}
We denote the category $\mathbf{Th}_{\mathcal{F}^\infty}$ by $\infty\mathbf{Law}$ and refer to its objects as \emph{$\infty$-Lawvere theories}.
\end{notation}

\begin{example}
The identity morphism $1_{\mathcal{F}^\infty}:\mathcal{F}^\infty\to\mathcal{F}^\infty$ is an $\infty$-Lawvere theory. Moreover, $1_{\mathcal{F}}$ is the initial object of $\infty\mathbf{Law}$.
\end{example}

\begin{definition}
Given an $\infty$-Lawvere theory $R:\mathcal{F}^\infty\to V$, the \emph{category of models}, denoted
\[
\mathbf{Mod}_{\mathcal{F}^\infty}(V),
\]
is the full subcategory of
\[
[V,\mathbf{Set}]
\]
whose objects are the functors $A:V\to\mathbf{Set}$ that preserve globular products in the first variable and finite products in the second variable.
\end{definition}

The initial $\infty$-Lawvere theory recovers Grothendieck $\infty$-groupoids.

\begin{lemma}\label{initial_inf_law_theory_gives_inf_grpds}
The map
\[
(-,1):AC\to\mathcal{F}^\infty
\]
of limit sketches induces an equivalence of categories
\[
(-,1)^*:\mathbf{Mod}_{\mathcal{F}^\infty}(\mathcal{F}^\infty)\to\mathbf{Mod}_{\Theta_0^\mathbf{op}}(AC).
\]
\end{lemma}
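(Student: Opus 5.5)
The plan is to exploit the universal property of $\mathcal{F}^\infty = AC\bullet\mathcal{F}$ together with the fact that $\mathcal{F}$ is the free finite-product completion of a single object $1$. Concretely, a model $A:\mathcal{F}^\infty\to\mathbf{Set}$ preserves globular products in the first variable and finite products in the second. I will show that it is determined, up to unique isomorphism, by its restriction $A\circ(-,1):AC\to\mathbf{Set}$. I will then build an explicit quasi-inverse.

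First, note that $(-,1)^*$ is well defined. The restriction $A(-,1)$ preserves globular products because $A$ does so in the first variable and $\lambda_{AC,\mathcal{F}}$ preserves them. Hence it is an object of $\mathbf{Mod}_{\Theta_0^\mathbf{op}}(AC)$.

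For the quasi-inverse, start from a Grothendieck $\infty$-groupoid $X:AC\to\mathbf{Set}$. Define a functor on $AC\diamond\mathcal{F}$ by $(\vec p,m)\mapsto X(\vec p)^m$. The pieces of the defining colimit of Construction~\ref{glob_of_lim_sk_th} are then handled as follows.
\begin{itemize}
\item On $AC\times\mathcal{F}$, a pair $(\alpha,h)$ with $h:m\to m'$ in $\mathcal{F}$ acts by $X(\alpha)^{m}$ followed by the reindexing map induced by $h$. This is functorial because the product category commutes the two actions.
\item On $\Theta_0^\mathbf{op}\times\mathcal{F}$ the action is the restriction of the previous one.
\item On the funny tensor pieces $AC\odot\mathcal{F}$, one sends generating arrows in each variable separately. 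These agree on overlaps, so the colimit's universal property yields a functor $\widetilde X:AC\diamond\mathcal{F}\to\mathbf{Set}$.
\end{itemize}
The functor $\widetilde X$ preserves globular products in the first variable, since powers commute with limits, and finite products in the second variable by construction. The universal property of $\lambda_{AC,\mathcal{F}}$ (the lemma characterizing $B\bullet L$) then extends it uniquely to a model $\widehat X$ of $\mathcal{F}^\infty$. Naturality in $X$ follows from the uniqueness clause, giving a functor $\mathbf{Mod}_{\Theta_0^\mathbf{op}}(AC)\to\mathbf{Mod}_{\mathcal{F}^\infty}(\mathcal{F}^\infty)$. Clearly $(-,1)^*\widehat X=X$.

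For the other composite, take a model $A$. Since $A$ preserves products in the second variable, there are canonical isomorphisms $A(\vec p,m)\cong A(\vec p,1)^m$. I then need to check that these are natural with respect to all morphisms of $\mathcal{F}^\infty$.

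The main obstacle is this naturality, because $\mathcal{F}^\infty$ contains freely generated arrows beyond those of $AC\times\mathcal{F}$. I intend to reduce to generators:
\begin{itemize}
\item morphisms in the image of $\lambda_{AC,\mathcal{F}}\circ\kappa_i$;
\item the structure maps of the freely adjoined globular products and specified limits.
\end{itemize}
Both $A$ and $\widehat{(-,1)^*A}$ preserve these limits, so naturality on the limit structure maps is forced. Naturality on the colimit legs is a direct check, using that every arrow of $AC\times\mathcal{F}$ factors into an arrow of $AC$ at a fixed object and an arrow of $\mathcal{F}$ at a fixed object. Uniqueness in the universal property of $\lambda$ then shows that two structure-preserving functors agreeing on $AC\diamond\mathcal{F}$ agree, up to the coherent isomorphism, on all of $\mathcal{F}^\infty$. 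This gives the natural isomorphism $\widehat{(-,1)^*A}\cong A$ and hence the equivalence.
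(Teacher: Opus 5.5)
Your proposal is correct and follows the paper's own route: restriction along $(-,1)$, with quasi-inverse $X\mapsto ((\vec p,m)\mapsto X(\vec p)^m)$, which the paper simply asserts is straightforward to check is inverse to $(-,1)^*$. The one loose point is extending natural transformations and isomorphisms along $\lambda_{AC,\mathcal{F}}$, which needs the $2$-dimensional form of its universal property; you can get this from the stated one by applying it with target $\mathbf{Set}^{\to}$. Also, since $H=1_{\mathcal{F}}$, here $AC\diamond\mathcal{F}$ collapses to $AC\times\mathcal{F}$, which makes your naturality check immediate.
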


\begin{proof}
For every model $X$, we have
\[
(-,1)^*(X)=X(-,1),
\]
which is an $\infty$-groupoid of shape $AC$ by construction. Conversely, define
\[
\gamma:\mathbf{Mod}_{\Theta_0^\mathbf{op}}(AC)\to\mathbf{Mod}_{\mathcal{F}^\infty}(\mathcal{F}^\infty)
\]
on objects by
\[
\gamma(X)(\vec{p},k)=X(\vec{p})^k.
\]
It is straightforward to verify that $(-,1)^*$ and $\gamma$ are inverse equivalences.
\end{proof}

Lemma~\ref{initial_inf_law_theory_gives_inf_grpds} shows that $\mathbf{Mod}_{\mathcal{F}^\infty}(\mathcal{F}^\infty)$ is equivalent to the category of $\infty$-groupoids of shape $AC$. This is the analogue of the classical equivalence
\[
\mathbf{Mod}_{\mathcal{F}}(\mathcal{F})\simeq\mathbf{Set}.
\]

\begin{definition}
Every $\infty$-Lawvere theory $R:\mathcal{F}^\infty\to V$ induces a functor
\[
\mathbf{Mod}_{\mathcal{F}^\infty}(R): \mathbf{Mod}_{\mathcal{F}^\infty}(V)\to \mathbf{Mod}_{\mathcal{F}^\infty}(\mathcal{F}^\infty),
\]
called the \emph{underlying $\infty$-groupoid functor}. It sends a model of an $\infty$-Lawvere theory to its underlying $\infty$-groupoid.
\end{definition}
The following result will be used in the next section.

\begin{theorem}\label{forgetful functor is monadic}
Given an $\infty$-Lawvere theory $H:\mathcal{F}^\infty\to V$, the forgetful functor
\[
U:\mathbf{Mod}(V)\to\mathbf{Mod}(\mathcal{F}^\infty)
\]
is monadic.
\end{theorem}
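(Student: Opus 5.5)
The plan is to treat $\mathbf{Mod}(V)$ and $\mathbf{Mod}(\mathcal{F}^\infty)$ as categories of models of limit sketches, and to show that $U=H^*$ is a right adjoint between locally presentable categories that creates the relevant coequalizers. The first step is to note that both model categories are small-limit theories. $\mathbf{Mod}(V)$ is the full subcategory of $[V,\mathbf{Set}]$ on functors preserving a \emph{set} of specified finite limit cones: the globular products in the first variable and the finite products in the second. It is therefore the category of models of a small limit sketch, and hence locally presentable. It is moreover closed in $[V,\mathbf{Set}]$ under all small limits and under filtered colimits, since filtered colimits commute with finite limits in $\mathbf{Set}$. The same holds for $\mathbf{Mod}(\mathcal{F}^\infty)$.

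The second step is to obtain the left adjoint. The functor $U$ is precomposition with $H$. Because $H$ is a morphism of $\mathcal{F}^\infty$-theories, it sends specified cones to specified limit cones, so $U$ does land in $\mathbf{Mod}(\mathcal{F}^\infty)$. Limits and filtered colimits in both categories are computed pointwise, so $U$ preserves them. By the adjoint functor theorem for locally presentable categories (Adámek--Rosický 1.66), $U$ has a left adjoint $F$. Alternatively, one can construct $F$ directly as the reflection of the left Kan extension $\mathrm{Lan}_H$ into $\mathbf{Mod}(V)$, using the reflectivity of sketch models.

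The third step is to verify Beck's conditions.

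\emph{Conservativity.} Since $H$ is identity on objects, or at least bijective on objects after passing to $\mathcal{F}^\infty$-theories, $U$ is essentially the restriction of a functor along an essentially surjective map. A natural transformation of models that is an isomorphism after restriction along $H$ is then invertible componentwise, hence invertible. If $H$ is not assumed surjective on objects, I would use that every object of $V$ is a globular/finite-product limit of objects in the image of $H$. Models preserve those limits, so components at arbitrary objects are limits of invertible components.

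\emph{$U$-split coequalizers.} Given a parallel pair in $\mathbf{Mod}(V)$ whose image under $U$ has a split coequalizer, I form the pointwise coequalizer in $[V,\mathbf{Set}]$. On objects in the image of $H$ this coequalizer is split, hence absolute, so it is preserved by the finite limits appearing in the cones. Using the same fact that every object of $V$ is a specified limit of image objects, the pointwise coequalizer is again a model. It is then the coequalizer in $\mathbf{Mod}(V)$, and $U$ preserves it. Beck's theorem then yields monadicity.

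The main obstacle is this last point. Split coequalizers are preserved by every functor, but pointwise coequalizers at objects of $V$ outside the image of $H$ need not be split. So I need to know that objects of $V$ are generated from the image of $H$ under the specified limits, i.e., that $\infty$-Lawvere theories are identity-on-objects or bijective on objects, as is standard for theories over a limit sketch. I would first try to extract this from the definition of $\mathbf{Th}_{\mathcal{F}^\infty}$. Failing that, I would fall back on the general result that $H^*$ between sketch model categories is monadic whenever $H$ is bijective on objects (Adámek--Rosický, Lawvere-style theories). That result would be quoted, with the above argument serving as its verification.
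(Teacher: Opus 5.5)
The paper gives no proof of this theorem. It is stated and then only used, together with Batanin--White, in the final theorem. So there is no route to compare yours against. On its own terms, your Beck-monadicity argument is correct and is the standard one, provided you adopt the convention that an object of $\mathbf{Th}_{\mathcal{F}^\infty}$ is a small category $V$ with a bijective-on-objects functor $H$ sending the specified finite cones to limit cones. That convention is standard, and it is implicit in the paper's separation of $\mathbf{Th}_E$ from $\mathbf{Ex}_E$ and in the fact that every construction there adjoins only morphisms. Your steps on local presentability, pointwise limits and filtered colimits, and the left adjoint via the adjoint functor theorem are all fine.

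Two points should be tightened.

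First, you misplace where the object hypothesis is needed. The coequalizer step does not use it. Let $q:B\to C'$ be the pointwise coequalizer of $f,g:A\to B$ in $[V,\mathbf{Set}]$. Then $qH$ is the pointwise coequalizer of $fH,gH$. The split coequalizer $e:BH\to C$ is absolute, so it is also a coequalizer in $[\mathcal{F}^\infty,\mathbf{Set}]$, and hence $C'H\cong C$. Being a $V$-model is tested only on the $H$-images of the specified cones, so $C'$ is a model. This argument also replaces your vague phrase ``preserved by the finite limits appearing in the cones''.

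Second, bijectivity (or essential surjectivity) on objects is used exactly for conservativity, and there it is genuinely necessary. Adjoin a disjoint object to $\mathcal{F}^\infty$: the result is a cone-preserving functor whose $U$ forgets an arbitrary set, so $U$ is not conservative. Your fallback of writing each object of $V$ as a specified limit of objects in the image of $H$ would not repair this. Models are only required to preserve images of specified cones of $\mathcal{F}^\infty$, and a cone whose apex lies outside the image of $H$ is not one of those. So state bijectivity on objects as part of the definition of $\mathbf{Th}_{\mathcal{F}^\infty}$ at the outset rather than hedging.
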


\subsection*{The Unreduced Grothendieck Realization}
Let
\[
\Omega=\langle \mathcal{G}:P\xrightarrow{i}\mathbf{Fr}(\mathcal{G})\xrightarrow{\mathbf{st}}L\rangle
\]
be a controlled theory, let $k\geq 0$, and let $\lambda,\delta\in P(k,1)$ satisfy
\[
\mathbf{st}(i(\lambda))=\mathbf{st}(i(\delta)).
\]
Suppose moreover that $\mathbf{hgt}(\vec{p})\leq k+1$. Define $S^{\vec{p},k}_{(\lambda,\delta)}$ to be the $(AC\bullet\mathbf{Fr}(\mathcal{G}))$-theory obtained by freely adjoining a $(P,L)$-admissible pair (Definition~\ref{gen_admiss_pair}) of the form
\[
\begin{tikzcd}
	(\vec{p},n)\ar[r,shift left=.75ex,"f"]
	\ar[r,shift right=.75ex,swap,"g"]&
	(k,1)
\end{tikzcd}
\]
such that
\[
U_s(f)=U_s(g)=(0,\lambda)
\]
and
\[
U_t(f)=U_t(g)=(0,\delta).
\]
Next, define $D^{\vec{p},k}_{(\lambda,\delta)}$ to be the theory obtained by freely adjoining a lift to this admissible pair. By Theorem~\ref{freely_add_lifts}, there is a canonical morphism
\[
j^{\vec{p},k}_{(\lambda,\delta)}:
S^{\vec{p},k}_{(\lambda,\delta)}
\longrightarrow
D^{\vec{p},k}_{(\lambda,\delta)}
\]
in $\mathbf{Th}_{AC\bullet\mathbf{Fr}(\mathcal{G})}^{\cong}$.

Let $I_\Omega$ denote the set of all morphisms
$j^{\vec{p},k}_{(\lambda,\delta)}$. Applying the algebraic small object argument to $I_\Omega$ produces a fibrant replacement monad
\[
(R_\Omega,\eta_\Omega,\mu_\Omega).
\]

\begin{definition}
Let
\[
\Omega=\langle \mathcal{G}:P\xrightarrow{i}\mathbf{Fr}(\mathcal{G})\xrightarrow{\mathbf{st}}L\rangle
\]
be a controlled theory. The \emph{unreduced Grothendieck realization} of $\Omega$ is the $\infty$-Lawvere theory
\[
\mathbf{UGR}(\Omega)
=
R_\Omega(\mathbf{id}_{AC\bullet\mathbf{Fr}(\mathcal{G})})\circ !,
\]
where
\[
!:\mathcal{F}^\infty\to AC\bullet\mathbf{Fr}(\mathcal{G})
\]
is the initial morphism in $\mathbf{Th}_{\mathcal{F}^\infty}$.
\end{definition}

Now let
\[
(I,f,H):
\left\langle
\mathcal{G}:P\xrightarrow{i}\mathbf{Fr}(\mathcal{G})
\xrightarrow{\mathbf{st}}L
\right\rangle
\longrightarrow
\left\langle
\mathcal{G}':P'\xrightarrow{i'}\mathbf{Fr}(\mathcal{G}')
\xrightarrow{\mathbf{st}'}L'
\right\rangle
\]
be a morphism of controlled theories. For brevity, write
\[
\Omega=
\left\langle
\mathcal{G}:P\xrightarrow{i}\mathbf{Fr}(\mathcal{G})
\xrightarrow{\mathbf{st}}L
\right\rangle,
\qquad
\Omega'=
\left\langle
\mathcal{G}':P'\xrightarrow{i'}\mathbf{Fr}(\mathcal{G}')
\xrightarrow{\mathbf{st}'}L'
\right\rangle,
\]
and
\[
\Phi=(I,f,H).
\]

The unreduced Grothendieck realizations of $\Omega$ and $\Omega'$ are computed as the colimits of the sequences
\[
AC\bullet\mathbf{Fr}(\mathcal{G})=\mathbf{UGR}(\Omega)_0\to \mathbf{UGR}(\Omega)_1\to\cdots
\]
and
\[
AC\bullet\mathbf{Fr}(\mathcal{G}')=\mathbf{UGR}(\Omega')_0\to \mathbf{UGR}(\Omega')_1\to\cdots,
\]
respectively.

Define
\[
\mathbf{UGR}(\Phi)_0:=1_{AC}\bullet\mathbf{Fr}(I).
\]
Suppose inductively that
\[
\mathbf{UGR}(\Phi)_n:
\mathbf{UGR}(\Omega)_n
\longrightarrow
\mathbf{UGR}(\Omega')_n
\]
has been constructed. Given a $(P,L)$-admissible pair
\[
\begin{tikzcd}
	(\vec{p},n)\ar[r,shift left=.75ex,"f"]
	\ar[r,shift right=.75ex,swap,"g"]&
	(k,1)
\end{tikzcd}
\]
in $\mathbf{UGR}(\Omega)_n$, its image
\[
\begin{tikzcd}
	(\vec{p},n)\ar[r,shift left=.75ex,"\mathbf{UGR}(\Phi)_n(f)"]
	\ar[r,shift right=.75ex,swap,"\mathbf{UGR}(\Phi)_n(g)"]&
	(k,1)
\end{tikzcd}
\]
is a $(P',L')$-admissible pair in $\mathbf{UGR}(\Omega')_n$. Lemma~\ref{uni_prop_of_ext} therefore yields a unique morphism
\[
\mathbf{UGR}(\Phi)_{n+1}:
\mathbf{UGR}(\Omega)_{n+1}
\longrightarrow
\mathbf{UGR}(\Omega')_{n+1}
\]
extending $\mathbf{UGR}(\Phi)_n$ and satisfying
\[
\mathbf{UGR}(\Phi)_{n+1}(\delta_{f,g})=\delta_{\mathbf{UGR}(\Phi)_n(f),\,\mathbf{UGR}(\Phi)_n(g)}.
\]

Passing to colimits, the universal property induces a unique morphism
\[
\mathbf{UGR}(\Phi):
\mathbf{UGR}(\Omega)
\longrightarrow
\mathbf{UGR}(\Omega')
\]
of $\infty$-Lawvere theories. Consequently, we obtain a functor
\[
\mathbf{UGR}:\mathbf{cTh}\longrightarrow\infty\mathbf{Law}.
\]

Finally, by taking colimits, we extend the domain of $\mathbf{UGR}$ from controlled theories to connected diagrams of controlled theories.

\subsection*{The Reduced Grothendieck Realization}
Given a controlled theory $\Omega$, we now describe a reduced version of the Grothendieck realization. Consider the composite
\[
\Theta_0^\mathbf{op}\to AC\to \mathcal{F}^\infty\to \mathbf{UGR}(\Omega)_0,
\]
which we denote by $\overline{(-,1)}_0$. Observe that, in constructing $\mathbf{UGR}(\Omega)$, duplicate lifts are added for the $\infty$-groupoid portion of the structure. The reduced Grothendieck realization is obtained by identifying these duplicate lifts with the original ones.

Inductively, the map $\overline{(-,1)}_0$ extends uniquely to a morphism
\[
AC\xrightarrow{\overline{(-,1)}}\mathbf{UGR}(\Omega)
\]
that sends each lift added in the construction of $AC$ to the corresponding lift added in the construction of $\mathbf{UGR}(\Omega)$. Consequently, we obtain parallel morphisms
\[
\begin{tikzcd}
	AC\ar[r,shift left=.75ex,"\overline{(-{,}1)}"]
	\ar[r,shift right=.75ex,swap,"(-{,}1)"]&
	\mathbf{UGR}(\Omega).
\end{tikzcd}
\]

These induce the parallel morphisms
\[
\begin{tikzcd}
	\mathcal{F}^{\infty}\ar[r,shift left=.75ex]
	\ar[r,shift right=.75ex,swap]&
	\mathbf{UGR}(\Omega).
\end{tikzcd}
\]

We define the \emph{reduced Grothendieck realization} of $\Omega$, denoted $\mathbf{GR}(\Omega)$,
to be the coequalizer of this pair. This construction is functorial, yielding a functor
\[
\mathbf{GR}:\mathbf{cTh}\to\infty\mathbf{Law}.
\]
As before, we extend the domain of $\mathbf{GR}$ to connected diagrams of controlled theories by taking colimits.

At present, we are not aware of any structural advantage to considering the reduced Grothendieck realization. It is introduced primarily because it more closely resembles the behavior of familiar constructions in low-dimensional category theory, where operations such as the composition of $1$-cells are required to have a canonical choice. In subsequent work, we will primarily use the unreduced Grothendieck realization.

\subsection*{Higher Categorical Algebra and Group Completion}
We now provide globular models for the following structures:
\begin{itemize}
    \item monoidal $\infty$-groupoids,
    \item symmetric monoidal $\infty$-groupoids,
    \item coherent $\infty$-groups, and
    \item Picard $\infty$-groupoids.
\end{itemize}

\begin{definition}\label{monoid_inf_grpds}
A model
\[
A:\mathbf{UGR}(\Omega_{mon})\to\mathbf{Set}
\]
over $\mathbf{UGR}(\Omega_{mon})$ is called a \emph{monoidal $\infty$-groupoid}, where $\Omega_{mon}$ is the controlled theory defined in Example~4.3 of \cite{Taylor2026Controlled}.
\end{definition}

\begin{notation}
We write $\mathbf{MonGpd}^\infty$ for the category of models
\[
\mathbf{Mod}(\mathbf{UGR}(\Omega_{mon})).
\]
\end{notation}

\begin{definition}\label{sym_monoid_inf_grpds}
A model
\[
A:\mathbf{UGR}(\Omega_{cm})\to\mathbf{Set}
\]
over $\mathbf{UGR}(\Omega_{cm})$ is called a \emph{symmetric monoidal $\infty$-groupoid}, where $\Omega_{cm}$ is the controlled theory defined in Example~4.4 of \cite{Taylor2026Controlled}.
\end{definition}

\begin{notation}
We write $\mathbf{SMG}^\infty$ for the category of models
\[
\mathbf{Mod}(\mathbf{UGR}(\Omega_{cm})).
\]
\end{notation}

\begin{definition}\label{inf_groups}
A model
\[
A:\mathbf{UGR}(\Omega_{grp})\to\mathbf{Set}
\]
over $\mathbf{UGR}(\Omega_{grp})$ is called a \emph{coherent $\infty$-group}, where $\Omega_{grp}$ is the controlled theory defined in Example~4.5 of \cite{Taylor2026Controlled}.
\end{definition}

\begin{notation}
We write $\mathbf{Grp}^\infty$ for the category of models
\[
\mathbf{Mod}(\mathbf{UGR}(\Omega_{grp})).
\]
\end{notation}

We now turn our attention to Picard $\infty$-groupoids.

\begin{definition}\label{def_of_pic_inf_grpd}
A model
\[
A:\mathbf{UGR}(\Gamma_{pic})\to\mathbf{Set}
\]
over $\mathbf{UGR}(\Gamma_{pic})$ is called a \emph{Picard $\infty$-groupoid}, where $\Gamma_{pic}$ is the connected diagram of controlled theories defined in Example~4.9 of \cite{Taylor2026Controlled}.
\end{definition}

\begin{notation}
We write $\mathbf{Pic}^\infty$ for the category of models
\[
\mathbf{Mod}(\mathbf{UGR}(\Gamma_{pic})).
\]
\end{notation}

By cocompleteness of categories of models, we obtain the following free-forgetful adjunctions, where the left adjoints are given by Kan extension.
\[
\begin{tikzpicture}[node distance=4cm]
	\node (A) {$\mathbf{Fr}:\mathbf{MonGpd}^{\infty}$};
	\node (B)[right of=A]{$\infty\mathbf{Grp}:U$};
	\draw[transform canvas={yshift=0.3ex},->] (A) to node {} (B);
	\draw[transform canvas={yshift=-0.3ex},->,swap] (B) to node {} (A);
\end{tikzpicture}
\]

\[
\begin{tikzpicture}[node distance=4cm]
	\node (A) {$\mathbf{Fr}:\mathbf{SMG}^{\infty}$};
	\node (B)[right of=A]{$\mathbf{Pic}^{\infty}:U$};
	\draw[transform canvas={yshift=0.3ex},->] (A) to node {} (B);
	\draw[transform canvas={yshift=-0.3ex},->,swap] (B) to node {} (A);
\end{tikzpicture}
\]

By monadicity, we obtain the following.

\begin{lemma}
The following adjunctions are monadic.
\[
\begin{tikzpicture}[node distance=4cm]
	\node (A) {$\mathbf{Fr}:\mathbf{MonGpd}^{\infty}$};
	\node (B)[right of=A]{$\infty\mathbf{Grp}:U$};
	\draw[transform canvas={yshift=0.3ex},->] (A) to node {} (B);
	\draw[transform canvas={yshift=-0.3ex},->,swap] (B) to node {} (A);
\end{tikzpicture}
\]

\[
\begin{tikzpicture}[node distance=4cm]
	\node (A) {$\mathbf{Fr}:\mathbf{SMG}^{\infty}$};
	\node (B)[right of=A]{$\mathbf{Pic}^{\infty}:U$};
	\draw[transform canvas={yshift=0.3ex},->] (A) to node {} (B);
	\draw[transform canvas={yshift=-0.3ex},->,swap] (B) to node {} (A);
\end{tikzpicture}
\]
\end{lemma}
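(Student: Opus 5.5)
The plan is to get both adjunctions from one general fact: restriction along a morphism of $\infty$-Lawvere theories is a monadic right adjoint. Theorem~\ref{forgetful functor is monadic} is the special case in which the source is the initial theory $\mathcal{F}^\infty$. Concretely, I would exhibit morphisms of $\infty$-Lawvere theories
\[
\mathbf{UGR}(\Omega_{mon})\to\mathbf{UGR}(\Omega_{grp}),
\qquad
\mathbf{UGR}(\Omega_{cm})\to\mathbf{UGR}(\Gamma_{pic}),
\]
and identify each $U$ with precomposition along the corresponding map.

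For the first map, I would use the morphism of controlled theories $\Omega_{mon}\to\Omega_{grp}$ from \cite{Taylor2026Controlled}, which includes the monoid signature into the group signature. Applying the functor $\mathbf{UGR}:\mathbf{cTh}\to\infty\mathbf{Law}$ then gives the required map. For the second map, $\Gamma_{pic}$ is a connected diagram of controlled theories that contains $\Omega_{cm}$. The colimit coprojection defining $\mathbf{UGR}(\Gamma_{pic})$ therefore provides a morphism $\mathbf{UGR}(\Omega_{cm})\to\mathbf{UGR}(\Gamma_{pic})$. Both maps commute with the structure maps from $\mathcal{F}^\infty$, since they are morphisms in $\mathbf{Th}_{\mathcal{F}^\infty}$. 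As a result, the forgetful functors to $\infty\mathbf{Gpd}$ factor as $U$ followed by the underlying $\infty$-groupoid functor.

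Next I would verify the hypotheses of Beck's monadicity theorem for $U=\theta^*$, where $\theta:V\to V'$ is one of the morphisms above. Left adjointness is already given by left Kan extension together with the fact that the categories of models are reflective in presheaves, so they are locally presentable. For conservativity, I would use that $\theta$ is identity-on-objects up to the realization: both $V$ and $V'$ are generated under globular products and finite products from the objects $(n,1)$. A model morphism is determined by its components at these objects, and on each of these components $U$ does nothing. Hence if $U(\alpha)$ is invertible, then $\alpha$ is invertible.

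The main obstacle is creation of $U$-split coequalizers, or at least reflexive coequalizers. My first attempt would be to deduce it from Theorem~\ref{forgetful functor is monadic} by a two-out-of-three argument. Both $U_{V}:\mathbf{Mod}(V)\to\mathbf{Mod}(\mathcal{F}^\infty)$ and $U_{V'}=U_V\circ U$ are monadic, and $U_V$ preserves and reflects the relevant coequalizers. A $U$-split pair is then $U_{V'}$-split. The coequalizer therefore exists in $\mathbf{Mod}(V')$ and is created by $U_{V'}$, and applying $U_V$ identifies its image with the given split coequalizer. Reflection by the conservative functor $U_V$ shows that $U$ creates it. If this cancellation argument fails because $U_V$ does not preserve the needed colimits, I would fall back on showing directly that reflexive coequalizers of models are computed pointwise in $\mathbf{Set}$. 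This holds because globular products and finite products are finite limits that commute with reflexive coequalizers when they are sifted-compatible.
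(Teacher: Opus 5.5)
Your argument is correct and is essentially the paper's, which offers only the phrase ``by monadicity'' and implicitly rests on Theorem~\ref{forgetful functor is monadic}. Your cancellation step is exactly what is needed to pass from that theorem, stated only for the forgetful functor down to $\mathbf{Mod}(\mathcal{F}^\infty)$, to restriction along $\mathbf{UGR}(\Omega_{mon})\to\mathbf{UGR}(\Omega_{grp})$ and $\mathbf{UGR}(\Omega_{cm})\to\mathbf{UGR}(\Gamma_{pic})$: a $U$-split pair is $U_{V'}$-split, so its coequalizer exists in $\mathbf{Mod}(V')$, and conservativity of $U_V$ shows that $U$ preserves it.

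You should discard the fallback, however. Globular products are pullbacks rather than products, and reflexive coequalizers in $\mathbf{Set}$ do not commute with pullbacks: identifying $0$-cells creates new composable pairs, just as for $\mathbf{Cat}\to\mathbf{Graph}$. So coequalizers of models are not computed pointwise.
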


\begin{definition}
Let
\[
(T,\eta,\mu)
\]
and
\[
(T^\Sigma,\eta^\Sigma,\mu^\Sigma)
\]
be the monads induced on
\[
\mathbf{MonGpd}^\infty
\]
and
\[
\mathbf{SMG}^\infty,
\]
respectively. We call both of these monads the \emph{group completion monad}.
\end{definition}

\section{Homotopy theory of models}
Before we begin this section, we note that we obtained this section by modifying results of Section 4 of \cite{Dim1} and Subsection 5.3 of \cite{Henry2016}.
\subsection*{Path Components}
Given an $\infty$-groupoid $X$, we define $\pi_0(X)$ to be the quotient of $X_0$ by the smallest equivalence relation $\sim$ such that $x\sim x'$ whenever there exists $f\in X_1$ such that
\[
s(f)=x,\qquad t(f)=x'.
\]
This construction defines a functor $\pi_0:\infty\mathbf{Gpd}\to\mathbf{Set}$. Furthermore, using the equivalence
\[
\mathbf{Mod}(\mathcal{F}^\infty)\simeq\infty\mathbf{Gpd},
\]
we obtain a functor 
\[
\mathbf{Mod}(\mathcal{F}^\infty)\xrightarrow{\simeq}\infty\mathbf{Gpd}\xrightarrow{\pi_0}\mathbf{Set}
\]
that we also denote by $\pi_0$.

Let $H:\mathcal{F}^\infty\to V$ be an $\infty$-Lawvere theory. Define the Lawvere theory $\overline{V}_0$ by setting the homset $\overline{V}_0(n,k)$ to be the quotient of
\[
V((0,n),(0,k))
\]
by the equivalence relation generated by declaring $f\sim g$ if there exists $\alpha\in V((0,n),(1,k))$ such that
\[
(s,1)\circ\alpha=f,\qquad (t,1)\circ\alpha=g.
\]

\begin{lemma}
Let $H:\mathcal{F}^\infty\to V$ be an $\infty$-Lawvere theory. The path object functor
\[
\pi_0:\mathbf{Mod}(\mathcal{F}^\infty)\to\mathbf{Set}
\]
lifts to a functor
\[
\pi_0:\mathbf{Mod}(V)\to\mathbf{Mod}(\overline{V}_0).
\]
\end{lemma}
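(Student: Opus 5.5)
Given a model $A:V\to\mathbf{Set}$, I would define $\pi_0(A):\overline{V}_0\to\mathbf{Set}$ on objects by $\pi_0(A)(n)=\pi_0(UA)^n$. Here $UA=A\circ H$ is the underlying $\infty$-groupoid of Theorem~\ref{forgetful functor is monadic}, viewed through Lemma~\ref{initial_inf_law_theory_gives_inf_grpds}. The first step is to identify $A(0,n)$ with $A(0,1)^n=X_0^n$, where $X=A(-,1)$; this holds because $A$ preserves finite products in the second variable. Similarly $A(1,n)\cong X_1^n$, and under these identifications $A(s,n)$ and $A(t,n)$ are the componentwise source and target maps. It follows that $\pi_0(X)^n$ is the quotient of $X_0^n$ by the componentwise relation generated by $1$-cells. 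This is the same as the quotient by the relation generated by elements of $A(1,n)$, because a tuple of $1$-cells may be padded with identity cells $Z$ (Notation~\ref{notation_for_induced_ids}) in the unused coordinates.

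On morphisms, a class $[f]\in\overline{V}_0(n,k)$ with representative $f\in V((0,n),(0,k))$ should act by $A(f):X_0^n\to X_0^k$ followed by the quotient. Two things need checking. (a) $A(f)$ respects $\sim$: if $x\sim x'$ is witnessed by some $c\in A(1,n)$, I need a cell in $A(1,k)$ joining $A(f)(x)$ and $A(f)(x')$. (b) The result is independent of the representative: if $\alpha\in V((0,n),(1,k))$ witnesses $f\sim g$, then $A(\alpha)(x)\in A(1,k)$ has source $A(f)(x)$ and target $A(g)(x)$, so the two images agree. Part (b) is immediate. Functoriality and compatibility with composition follow from functoriality of $A$ once both points hold. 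Finite products in $\overline{V}_0$ are inherited from the quotient, so $\pi_0(A)$ preserves them and is therefore a model. A morphism of models $A\to A'$ is a natural transformation, so it commutes with $A(s,-)$, $A(t,-)$ and $A(f)$; hence it descends to a natural transformation $\pi_0(A)\to\pi_0(A')$. By construction, composing with the forgetful functor gives the $\pi_0$ of the underlying $\infty$-groupoid, which is what it means for $\pi_0$ to lift.

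I expect (a) to be the main obstacle: an operation $f:(0,n)\to(0,k)$ must be shown to send $1$-cells to $1$-cells in a way compatible with sources and targets. My approach is to use the globular structure of $V$. Because $V$ receives $\mathcal{F}^\infty=AC\bullet\mathcal{F}$ and is built from the funny/diamond construction, $f$ should admit a ``whiskered'' counterpart $f_1:(1,n)\to(1,k)$ with $(s,k)f_1=f(s,n)$ and $(t,k)f_1=f(t,n)$, namely the action of $f$ on $1$-cells. If the theory does not supply such an $f_1$ directly, the fallback is to use a lift. The pair $f\circ(s,n)$, $f\circ(t,n):(1,n)\to(0,k)$ is admissible of dimension $0$, and contractibility of the realization in the relevant cases provides a lift $(1,n)\to(1,k)$ connecting the images. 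In the general case I would restrict to theories in which every such pair is admissible, noting that this is precisely where the hypothesis on $V$ enters.
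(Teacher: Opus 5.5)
\textbf{Gap: step (a) fails for a general $V$.} Step (a) cannot be closed at the generality of the statement. Your reason for expecting a whiskered $f_1:(1,n)\to(1,k)$ is that $V$ ``is built from the funny/diamond construction.'' That construction describes $\mathcal{F}^\infty=AC\bullet\mathcal{F}$, not an arbitrary theory $H:\mathcal{F}^\infty\to V$. Morphisms in the image of $H$ have the form $(1_0,\phi)$ and are whiskered by $(1_1,\phi)$, but nothing constrains the other morphisms of $V$.

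For instance, let $V$ be obtained from $\mathcal{F}^\infty$ by freely adjoining an arrow $u:(0,1)\to(0,1)$. A $V$-model is then an $\infty$-groupoid $X$ together with an arbitrary function $u:X_0\to X_0$. Suppose $X$ has a $1$-cell from $a$ to $b\neq a$ and an object $c$ in another component (e.g.\ a fundamental $\infty$-groupoid of a disconnected space), and set $u(a)=a$, $u(b)=c$. For the quotient $q:X_0\to\pi_0(X)$ we get $q(a)=q(b)$ but $q(u(a))\neq q(u(b))$. So no $\overline{V}_0$-structure on $\pi_0(X)$ is compatible with $q$, which is exactly what your construction requires.

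The same defect already affects $\overline{V}_0$ itself. The generating relation is stable under precomposition, since $\alpha\circ h$ witnesses $fh\sim gh$. Stability under postcomposition, however, needs exactly this whiskering. So either composition in $\overline{V}_0$ is ill defined, or one passes to the generated congruence, in which case your ``immediate'' step (b) also depends on (a). The paper gives no proof of this lemma, beyond remarking that the section adapts Section~4 of \cite{Dim1} and Subsection~5.3 of \cite{Henry2016}, so it does not resolve this point either.

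\textbf{The contractibility fallback and a repair.} Your fallback via contractibility does not help in general. For $f:(0,n)\to(0,k)$ one has $U(f\circ(s,n))=f\circ(s,n)\circ(Z,n)=f=U(f\circ(t,n))$. Hence the pair $(f\circ(s,n),f\circ(t,n))$ is $S$-admissible only when $f\in\operatorname{im}(0,\phi(-))$.

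What is actually needed is an extra hypothesis on $V$: every $f:(0,n)\to(0,k)$ admits some $f_1:(1,n)\to(1,k)$ with $(s,k)\circ f_1=f\circ(s,n)$ and $(t,k)\circ f_1=f\circ(t,n)$. A weaker alternative is that each $A(f)$ preserves $\sim$ in every model.

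This hypothesis does hold for $V=\mathbf{UGR}(\Omega)$. Granting Theorem~\ref{freely_add_lifts}, the $0$-cell level is isomorphic via $(0,-)$ to $\mathbf{Fr}(\mathcal{G})$, so every such $f$ equals $(0,\lambda)$. The morphism $(1,\lambda)$, coming through $\kappa_1:\Theta_0^\mathbf{op}\times\mathbf{Fr}(\mathcal{G})\to AC\diamond\mathbf{Fr}(\mathcal{G})$, whiskers it strictly, because $(s,k)\circ(1,\lambda)=(s,\lambda)=(0,\lambda)\circ(s,n)$ there, and likewise for $t$.

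Under such a hypothesis, the rest of your argument goes through: the identifications $A(i,n)\cong X_i^n$, the padding with $Z$, and the descent of natural transformations.
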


\subsection*{Homotopy Group Functors}

Let $X$ be an $\infty$-groupoid, let $x\in X_0$, and let $n\geq 1$. Define $\pi_n(X,x)$ to be the group obtained by quotienting the set
\[
\{f\in X_n:s(f)=t(f)=Z^{n-1}(x)\}
\]
by the smallest equivalence relation $\sim$ such that $f\sim f'$ whenever there exists $\alpha\in X_{n+1}$ such that
\[
s(\alpha)=f,\qquad t(\alpha)=f'.
\]

This construction induces a functor
\[
\mathbf{Mod}(\mathcal{F}^\infty)\xrightarrow{\simeq}\infty\mathbf{Gpd}\xrightarrow{\pi_1}\mathbf{Grp}
\]
and a functor
\[
\mathbf{Mod}(\mathcal{F}^\infty)\xrightarrow{\simeq}\infty\mathbf{Gpd}\xrightarrow{\pi_n}\mathbf{Ab}
\]
for $n\geq 2$. Let $H:\mathcal{F}^\infty\to V$ be an $\infty$-Lawvere theory. We obtain induced functors
\[
\mathbf{Mod}(V)\xrightarrow{H^*}\mathbf{Mod}(\mathcal{F}^\infty)\xrightarrow{\pi_1}\mathbf{Grp}
\]
and
\[
\mathbf{Mod}(V)\xrightarrow{H^*}\mathbf{Mod}(\mathcal{F}^\infty)\xrightarrow{\pi_n}\mathbf{Ab}
\]
for $n\geq 2$.

\subsection*{Weak Equivalences and Generating Data}
For this subsection, we fix an $\infty$-Lawvere theory $H:\mathcal{F}^\infty\to V$.

\begin{definition}
A morphism $f:X\to Y$ of $V$-models is called a \emph{weak equivalence} if the following conditions are satisfied.
\begin{itemize}
	\item The map $\pi_0(f):\pi_0(X)\to\pi_0(Y)$ is an isomorphism.
	\item For all $n\geq 1$ and every $x\in X_0$, the morphism
	\[
	\pi_n(X,x)\to\pi_n(Y,f(x)),
	\]
	induced by $f$, is an isomorphism.
\end{itemize}
\end{definition}

\begin{notation}
We write $\mathbf{W}^V$ for the class of weak equivalences of $V$-models.
\end{notation}

We now define $D^k_V$ to be the free $V$-model generated by a single $k$-cell and $S^{k-1}_V$ to be the free $V$-model generated by a parallel pair of $(k-1)$-cells for all $k\geq 0$.

\begin{definition}
The set of \emph{generating cofibrations} (respectively, \emph{generating trivial cofibrations}) consists of the boundary inclusions
$\mathbf{I}^V\overset{def}{=}\{j^V_k:S_V^{k-1}\rightarrow D_V^k\}_{k\geq 0}$
(respectively, the source maps
$\mathbf{J}^V\overset{def}{=}\{\sigma_k:=D(s):D_k\rightarrow D_{k+1}\}_{k\geq 0}$).
\end{definition}

\begin{definition}
A map $f:X\to Y$ is called a \emph{(trivial) fibration} if it has the right lifting property with respect to the set $\mathbf{J}$ (respectively, $\mathbf{I}$).
\end{definition}
\subsection*{Hypothetical Semi-Model Structures}
Let $H:\mathcal{F}^\infty\to V$ be an $\infty$-Lawvere theory.

\begin{definition}
The \emph{canonical semi-model structure on $\mathbf{Mod}(V)$}, if it exists, is the semi-model structure with
\begin{itemize}
    \item weak equivalences given by the class $\mathbf{W}^V$,
    \item generating cofibrations given by $\mathbf{I}^V$, and
    \item generating trivial cofibrations given by $\mathbf{J}^V$.
\end{itemize}
\end{definition}

In the case $(V,H)=(\mathcal{F}^\infty,\mathbf{id}_{\mathcal{F}^\infty})$, we recover $\infty$-groupoids and the canonical semi-model structure introduced by Henry in \cite{Henry2016}. If they exist, all other canonical semi-model structures are obtained by transferring the canonical semi-model structure from $\infty$-groupoids.

We now introduce the following conjecture.

\begin{conjecture}[The Generalized Pushout Conjecture]\label{Gen_PO_Conj}
The following statements hold.
\begin{itemize}
    \item Given a cofibrant $\infty$-groupoid $X$ and a pushout
    \[
    \begin{tikzpicture}[node distance=2.5cm]
        \node(A) {$D^n$};
        \node (B) [right of=A]{$X$};
        \node(C)[below of=A]{$D^{n+1}$};
        \node (D)[right of=C]{$X_+$};
        \draw[->](B) to node[black,right=3]{$p$}(D);
        \draw[->](A) to node {}(B);
        \draw[->] (A) to node {}(C);
        \draw[->](C) to node {}(D);
    \end{tikzpicture},
    \]
    the map $p:X\to X_+$ is a weak equivalence.
    \item Given a connected diagram of controlled theories $J$, a model $X$ in $\mathbf{Mod}(\mathbf{UGR}(\Gamma))$ with $UX$ cofibrant, and a pushout
    \[
    \begin{tikzpicture}[node distance=2.5cm]
        \node(A) {$\mathbf{Fr} (D^n)$};
        \node (B) [right of=A]{$X$};
        \node(C)[below of=A]{$\mathbf{Fr}(D^{n+1})$};
        \node (D)[right of=C]{$X_+$};
        \draw[->](B) to node[black,right=3]{$p$}(D);
        \draw[->](A) to node {}(B);
        \draw[->] (A) to node {}(C);
        \draw[->](C) to node {}(D);
    \end{tikzpicture}
    \]
    of diagrams, the map $p:X\to X_+$ is a weak equivalence.
\end{itemize}
\end{conjecture}

We now show that this conjecture is the only remaining obstruction to establishing a full connection between algebraic and homotopical models of higher category theory.

\begin{theorem}
Assuming the Generalized Pushout Conjecture, the semi-model structure exists on $\infty\mathbf{Gpd}$ and the Homotopy Hypothesis holds. Moreover, the semi-model structure exists on $\mathbf{Mod}(\mathbf{UGR}(\Gamma))$.
\end{theorem}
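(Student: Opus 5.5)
The plan splits into the $\infty$-groupoid case and the transferred case, and in both we lean on existing machinery rather than rebuilding it.

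\emph{The $\infty$-groupoid case.} First, Lemma~\ref{initial_inf_law_theory_gives_inf_grpds} identifies $\mathbf{Mod}(\mathcal{F}^\infty)$ with $\infty\mathbf{Gpd}_{AC}$. The first bullet of Conjecture~\ref{Gen_PO_Conj} is then exactly Henry's pushout conjecture for the coherator $AC$. Henry's work \cite{Henry2016}, together with Henry--Lanari \cite{HenLan,LanE}, shows that this conjecture yields the semi-model structure on $\infty\mathbf{Gpd}$. Its weak equivalences are those detected by $\pi_0$ and the $\pi_n$, its generating cofibrations are the boundary inclusions $\mathbf{I}$, and its generating trivial cofibrations are the source maps $\mathbf{J}$. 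In the same work, Henry shows that the conjecture implies the Homotopy Hypothesis, namely a Quillen equivalence with the Kan--Quillen model structure on simplicial sets.

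The main check here is that Henry's arguments apply to our particular coherator. I would verify that $AC$ satisfies his hypotheses: it is obtained by the algebraic small object argument as a transfinite composite of pushouts of the maps $j_{\vec p,k}$ in $I$. It is therefore a genuine $(\infty,0)$-coherator in Grothendieck's sense, and it coincides with Ara's reduced coherator \cite{Dim1}, to which Henry's results explicitly apply.

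\emph{The transferred case.} Here the strategy is to transfer the structure along the adjunction $\mathbf{Fr}\dashv U:\mathbf{Mod}(\mathbf{UGR}(\Gamma))\to\mathbf{Mod}(\mathcal{F}^\infty)$. This adjunction exists and is monadic by Theorem~\ref{forgetful functor is monadic}. For $\Gamma$ a connected diagram, $\mathbf{UGR}(\Gamma)$ is a colimit in $\infty\mathbf{Law}$, and the argument is unchanged. We set:
\begin{itemize}
\item weak equivalences to be the maps $f$ with $Uf$ a weak equivalence, which is $\mathbf{W}^V$ by definition of $\pi_n$ via $H^*$;
\item generating cofibrations to be $\mathbf{Fr}(\mathbf{I})=\mathbf{I}^V$;
\item generating trivial cofibrations to be $\mathbf{Fr}(\mathbf{J})=\mathbf{J}^V$.
\end{itemize}
Both the free models $D^k_V,S^{k-1}_V$ and $\mathbf{Fr}$ of the globular generators are finitely presentable, since $V$ is small. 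The category $\mathbf{Mod}(V)$ is locally presentable, so the small object argument applies to both sets.

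We then invoke the standard transfer theorem for semi-model structures (e.g.\ Fresse's or White--Yau's version). It requires that every relative $\mathbf{J}^V$-cell complex with cofibrant domain has underlying map a weak equivalence. Because the semi-model axioms only concern cofibrant domains, it suffices to handle a single pushout of $\mathbf{Fr}(D^n)\to\mathbf{Fr}(D^{n+1})$ along a map into $X$ with $UX$ cofibrant. Transfinite composites are then handled because $U$ preserves filtered colimits (models are defined by finite limits) and weak equivalences are detected by homotopy groups, which commute with filtered colimits. We also use that $U$ preserves cofibrant objects along cell attachments, to keep the inductive hypothesis $UX$ cofibrant. The single-pushout case is precisely the second bullet of Conjecture~\ref{Gen_PO_Conj}.

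\emph{Expected main obstacle.} The hardest step is the bookkeeping that the conjecture's hypothesis ``$UX$ cofibrant'' is maintained throughout a cell complex built from a cofibrant $\mathbf{Fr}$-model. It is not automatic that $U$ preserves cofibrant objects. I would try to show that $U\mathbf{Fr}$ applied to globular cell complexes is again a cell complex, using the explicit description of $\mathbf{UGR}(\Gamma)$ as a colimit of lift-adjunctions from Lemma~\ref{uni_prop_of_ext}. Failing that, I would weaken to the semi-model axiom requiring only cofibrant sources and argue retract-closure.
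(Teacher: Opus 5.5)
Your route is the same as the paper's. The paper proves the first part by citing Corollary~5.3.13 of \cite{Henry2016}. It proves the second by combining the semi-model transfer theorem, Theorem~2.2.2 of \cite{BataninWhite2022}, with Theorem~\ref{forgetful functor is monadic}, with the second bullet of Conjecture~\ref{Gen_PO_Conj} intended to supply the pushout hypothesis. The paper gives no more detail than that. Your choice of weak equivalences and generators, and your treatment of smallness and transfinite composites via filtered colimits, are consistent with it.

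The step you flag as the main obstacle is a genuine gap, and your proposal does not close it.

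\begin{itemize}
\item \emph{The bookkeeping inside $\mathbf{Mod}(V)$ is automatic.} Each $D^n\to D^{n+1}$ is a composite of two pushouts of boundary inclusions, so $\mathbf{J}^V$-maps are $\mathbf{I}^V$-cell complexes. Hence every stage of the $\mathbf{J}^V$ small object argument started at a cofibrant $V$-model is again cofibrant in $\mathbf{Mod}(V)$.

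\item \emph{What is actually missing is a lemma.} The conjecture needs cofibrancy of the underlying $\infty$-groupoid, so you need a lemma that $U$ carries cofibrant $V$-models to cofibrant $\infty$-groupoids. For instance, show that $U$ of the initial $V$-model is cofibrant, and that $U$ sends pushouts of $\mathbf{Fr}(S^{k-1})\to\mathbf{Fr}(D^k)$ along models with cofibrant underlying $\infty$-groupoid to cofibrations. Transfinite composites and retracts then finish the argument.

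\item \emph{The lemma is not formal.} The underlying $\infty$-groupoid of a pushout of models is not a pushout of $\infty$-groupoids: the new cell is freely combined with old cells under all operations of $\mathbf{UGR}(\Gamma)$. This is the same phenomenon that forces $\Sigma$-cofibrancy-type hypotheses in operadic transfer.

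\item \emph{The conjecture does not supply it.} The conjecture only concerns pushouts of $\mathbf{J}$-type maps and weak equivalences.

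\item \emph{Your fallback does not help.} The semi-model axioms already restrict to cofibrant sources, and ``cofibrant'' there means cofibrant in $\mathbf{Mod}(V)$. Retract-closure only transports the property from cell complexes to their retracts, so it is the last step of the lemma, not a substitute for it.
\end{itemize}

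The paper's two-line proof is silent on this point too. It leaves implicit the match between the conjecture's hypothesis ``$UX$ cofibrant'' and the hypothesis of the cited transfer theorem. To complete your argument you have two options. You can prove the lemma, for example via the explicit description of $\mathbf{UGR}(\Gamma)$ by freely adjoined lifts, as you suggest. Otherwise you must use a version of the second bullet quantified over cofibrant objects of $\mathbf{Mod}(\mathbf{UGR}(\Gamma))$.
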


\begin{proof}
The first statement follows directly from Corollary 5.3.13 of \cite{Henry2016}. The second statement follows by combining Theorem 2.2.2 of \cite{BataninWhite2022} with Theorem \ref{forgetful functor is monadic}.
\end{proof}

\begin{references*}

\bibitem[Ad\'{a}mek and Rosick\'{y}, 1994]{AdRo}
J.~Ad\'{a}mek and J.~Rosick\'{y},
\emph{Locally Presentable and Accessible Categories}.
Cambridge University Press, Cambridge, 1994.

\bibitem[Ara, 2013]{Dim1}
D.~Ara,
\emph{On the homotopy theory of Grothendieck $\infty$-groupoids}.
J. Pure Appl. Algebra \textbf{217} (2013), no.~7, 1237--1278.

\bibitem[Batanin and White, 2022]{BataninWhite2022}
M.~Batanin and D.~White,
\emph{Homotopy Theory of Algebras of Substitudes and Their Localisation}.
Trans. Amer. Math. Soc. \textbf{375} (2022), no.~5, 3569--3640.

\bibitem[Bourke, 2016]{Bo2}
J.~Bourke,
\emph{Note on the construction of globular weak omega-groupoids from
types, topological spaces, $\ldots$}.
Cahiers Topologie G\'eom. Diff\'erentielle Cat\'eg. \textbf{57}
(2016), no.~4, 281--294.

\bibitem[Bressie, 2020]{Bressie2020}
P.~M.~Bressie,
\emph{The $\omega$-categorification of Algebraic Theories}.
Available at arXiv:2006.07191 [math.CT], 2020.

\bibitem[Cheng, 2011]{Cheng2011}
E.~Cheng,
\emph{Iterated Distributive Laws}.
Math. Proc. Cambridge Philos. Soc.
\textbf{150} (2011), no.~3, 459--487.

\bibitem[Garner, 2009b]{Garner2009}
R.~Garner,
\emph{Understanding the Small Object Argument}.
Appl. Categ. Structures
\textbf{17} (2009), no.~3, 247--285.

\bibitem[Grothendieck, 2022]{Gr1}
A.~Grothendieck,
\emph{Pursuing Stacks (\`a la poursuite des champs). Vol.~I}.
Documents Math\'ematiques~20.
Soci\'et\'e Math\'ematique de France, Paris, 2022.

\bibitem[Henry, 2016]{Henry2016}
S.~Henry,
\emph{Algebraic Models of Homotopy Types and the Homotopy Hypothesis}.
Available at arXiv:1609.04622 [math.AT], 2016.

\bibitem[Henry and Lanari, 2023]{HenLan}
S.~Henry and E.~Lanari,
\emph{On the homotopy hypothesis for 3-groupoids}.
Theory Appl. Categ.
\textbf{39} (2023), Paper No.~26, 735--768.

\bibitem[Lanari, 2018]{LanE}
E.~Lanari,
\emph{A semi-model structure for Grothendieck weak 3-groupoids}.
Preprint, 2018.

\bibitem[Lawvere, 1963]{Law2}
F.~W.~Lawvere,
\emph{Functorial semantics of algebraic theories}.
Proc. Nat. Acad. Sci. U.S.A.
\textbf{50} (1963), 869--872.

\bibitem[Leinster, 2004]{Lein}
T.~Leinster,
\emph{Higher Operads, Higher Categories}.
London Mathematical Society Lecture Note Series~298.
Cambridge University Press, Cambridge, 2004.

\bibitem[Maltsiniotis, 2005]{Geo2}
G.~Maltsiniotis,
\emph{La th\'eorie de l'homotopie de Grothendieck}.
Ast\'erisque \textbf{301} (2005), vi+140 pp.

\bibitem[Maltsiniotis, 2010]{Malt}
G.~Maltsiniotis,
\emph{Grothendieck $\infty$-groupoids, and still another definition of
$\infty$-categories}.
Available at arXiv:1009.2331 [math.CT], 2010.

\bibitem[Taylor, 2026a]{Taylor2026}
J.~Taylor,
\emph{Controlled Theories and Infinity Lawvere Theories:
Weakening Axioms in the Globular Setting}.
Ph.D. thesis,
Case Western Reserve University,
2026.

\bibitem[Taylor, 2026b]{Taylor2026Controlled}
J.~Taylor,
\emph{Controlled theories, categorification, and homotopification}.
Available at arXiv:2607.24716 [math.CT], 2026.
\end{references*}

\end{document}